\newtheorem{theorem}{Theorem}[section]
\newtheorem{lemma}[theorem]{Lemma}
\newtheorem{example}[theorem]{Example}
\newtheorem{proposition}[theorem]{Proposition}
\newtheorem{corollary}[theorem]{Corollary}
\newcommand{\FF}{\overline{\boldsymbol{F}}}
\definecolor{darkgreen}{rgb}{0.03, 0.5, 0.03}
 \newcommand{\co} {\boldsymbol c}
 \newcommand{\Na} {\mbox{\rm Na}}
  \newcommand{\Kr} {\mbox{\rm Kr}}
   \newcommand{\Zar} {\mbox{\rm Zar}}
  \newcommand{\Max} {\mbox{\rm Max}}
    \newcommand{\Spec} {\mbox{\rm Spec}}
      \newcommand{\QMax} {\mbox{\rm QMax}}
        \newcommand{\QSpec} {\mbox{\rm QSpec}}
                  \newcommand{\Clinv}{\mbox{\texttt{Cl}}^{\mbox{\tiny\emph{\texttt{inv}}}}}
                  \newcommand{\Clinvrm}{\mbox{\emph{\texttt{Cl}}}^{\mbox{\tiny\emph{\texttt{inv}}}}}
\newcommand{\Fb}{\boldsymbol{\overline{F}}}
\newcommand{\f}{\boldsymbol{{f}}}
  \newcommand{\stf} {\star{_{\!{_f}}}}
    \newcommand{\stt} {\widetilde{\star}}
\begin{document}

\title
[SHARPNESS AND SEMISTAR  OPERATIONS]{SHARPNESS AND SEMISTAR  OPERATIONS\\ IN PR\"UFER-LIKE DOMAINS}

\author{Marco Fontana}
\address{Dipartimento di Matematica e Fisica, Universit\`a degli Studi Roma Tre, Largo San Leonardo Murialdo, 1, 00146 Roma, Italy}
\email{fontana@mat.uniroma3.it}

\author{Evan Houston}
\address{Department of Mathematics and Statistics, University of North Carolina at Charlotte,
Charlotte, NC 28223 U.S.A.}
\email{eghousto@uncc.edu}

\author{Mi Hee Park}
\address{Department of Mathematics,
Chung-Ang University, Seoul  06974, Korea}
\email{mhpark@cau.ac.kr}

\thanks{ The first-named author was partially supported   by {\sl GNSAGA} of {\sl Istituto Nazionale di Alta Matematica}.}

\thanks{{The second-named author was supported by a grant from the Simons Foundation (\#354565).}}

\thanks{The third-named author was supported by the National Research Foundation of Korea (NRF) grant funded by the Korea government(MSIP) (No. 2015R1C1A2A01055124).}

\date{\today}
%

%
%
 \begin{abstract}   %
Let $\star$ be a semistar operation on a domain $D$, $\stf$ the finite-type semistar operation associated to $\star$, and $D$ a Pr\"ufer $\star$-multiplication domain (P$\star$MD).  For the special case of a Pr\"ufer domain (where $\star$ is equal to the identity semistar operation), we show that a nonzero prime $P$ of $D$ is sharp, that is, that $D_P \nsupseteq \bigcap  D_M$, where the intersection is taken over the maximal ideals $M$ of $D$ that do not contain $P$, if and only if two closely related spectral semistar operations on $D$ differ.  We then give an appropriate definition of $\stf$-sharpness for an arbitrary P$\star$MD $D$ and show that a nonzero prime $P$ of $D$ is $\stf$-sharp if and only if its extension to the $\star$-Nagata ring of $D$ is sharp.  Calling a P$\star$MD $\stf$-sharp ($\stf$-doublesharp) if each maximal (prime) $\stf$-ideal of $D$ is sharp, we also prove that such a $D$ is $\stf$-doublesharp if and only if each $(\star, t)$-linked overring of $D$ is $\stf$-sharp.
\end{abstract}

\maketitle


 \section*{Introduction}

A nonzero prime ideal $P$ of a Pr\"ufer domain $D$ is said to be \emph{sharp} if $\bigcap \{D_M \mid M \in \Max(D),\  P \nsubseteq M\} \nsubseteq  D_P$.  In \cite{G-66} Gilmer showed that an almost Dedekind domain with all maximal ideals sharp must be a Dedekind domain.  Then Gilmer and Heinzer \cite{GH-67} made a more thorough study of sharpness in Pr\"ufer domains, proving \cite[Theorem 3]{GH-67} that in a Pr\"ufer domain $D$ all nonzero primes are sharp if and only if in each overring of $D$ all maximal ideals are sharp.  The primary goal of this work is to extend the study of sharpness to Pr\"ufer $\star$-multiplication domains (P$\star$MDs) $D$, where $\star$ is an arbitrary semistar operation  (definition recalled below) on $D$.

At this point it is helpful to recall a result of Griffin: An integral domain $D$ is a P$v$MD (where $v$ is the ordinary star operation on $D$ and $t$ is the canonically associated finite-type star operation associated to $v$) if and only if $D_M$ is a valua\-tion domain for each maximal prime $t$-ideal $M$ of $D$. In   Section 1, we provide background on semistar operations and P$\star$MDs and establish a few new results.  In particular, we show (Lemma~\ref{lemma3}) that if $D$ is a P$\star$MD (for some semistar ope\-ration $\star$ on $D$), then $R:=D^{\tilde{\star}}$ is an ``ordinary'' P$v$MD and $D^{\tilde{\star}}=\bigcap \{D_{Q \cap R} \mid Q \text{ is a maximal $t$-ideal of $R$}\}$ (where the semistar operation $\tilde{\star}$ is a spectral semistar operation naturally related to $\star$ and is defined below).

In Section 2 we give a semistar characterization of sharpness in Pr\"ufer domains, and we show that this can be extended to the P$\star$MD setting. It is well-known that $D$ is a P$\star$MD if and only if the associated Nagata ring, defined by $\Na(D,\star):=  \{f/g \mid f, g \in D[X],\ g \ne 0,\ {\boldsymbol{c}}(g)^{\star}=D^{\star}\}$ (where $X$ is an indeterminate and ${\boldsymbol{c}}(g)$ denotes the \emph{content} of $g$, that is, the ideal generated by the coefficients of $g$) is a Pr\"ufer domain \cite[Theorem 3.1]{FJS-03}.  This provides an important tool for achieving our main results in Section 3.  In our main theorem, we give a bijection between a certain set of overrings of a P$\star$MD and the entire set of overrings of the associated $\star$-Nagata ring of $D$, and, in Corollary~\ref{c:##}, we obtain an extension of the result of Gilmer and Heinzer mentioned above to P$\star$MDs.

\bigskip
\section{Background results}

Let $D$ be an integral domain with quotient field $K$. Let
$\boldsymbol{\overline{F}}(D)$ denote the set of all nonzero
$D$--submodules of   $K$, and let $\boldsymbol{F}(D)$ be the set of
all nonzero fractional ideals of $D$, i.e., $E \in
\boldsymbol{F}(D)$ if $E \in \boldsymbol{\overline{F}}(D)$ and
there exists a nonzero $d \in D$ with $dE \subseteq D$. Let
$\f(D)$ be the set of all nonzero finitely generated
$D$--submodules of $K$. Then, obviously $\f(D)
\subseteq \boldsymbol{F}(D) \subseteq
\boldsymbol{\overline{F}}(D)$.

Following Okabe-Matsuda \cite{o-m}, a \emph{semistar operation} on $D$ is a map $\star:
\boldsymbol{\overline{F}}(D) \to \boldsymbol{\overline{F}}(D),\ E \mapsto E^\star$,  such that, for all $x \in K$, $x \neq 0$, and
for all $E,F \in \boldsymbol{\overline{F}}(D)$, the following
properties hold:
\begin{enumerate}
\item[$(\star_1)$] $(xE)^\star=xE^\star$;
 \item[$(\star_2)$] $E
\subseteq F$ implies $E^\star \subseteq F^\star$;
\item[$(\star_3)$] $E \subseteq E^\star$ and $E^{\star \star} :=
\left(E^\star \right)^\star=E^\star$.
\end{enumerate}

A \emph{(semi)star operation} is a semistar operation that,
restricted to $\boldsymbol{F}(D)$,  is a star operation (in the
sense of \cite[Section 32]{G}). It is easy to see that a
semistar operation $\star$ on $D$ is a (semi)star operation if and
only if $D^\star = D$.

 If $\star$ is a semistar operation on $D$, then we can
consider a map\ $\star_{\!_f}: \boldsymbol{\overline{F}}(D) \to
\boldsymbol{\overline{F}}(D)$ defined, for each $E \in
\boldsymbol{\overline{F}}(D)$, as follows:

\centerline{$E^{\star_{\!_f}}:=\bigcup \{F^\star\mid \ F \in
\boldsymbol{f}(D) \mbox{ and } F \subseteq E\}$.}

\noindent It is easy to see that $\star_{\!_f}$ is a semistar
operation on $D$, called \emph{the  finite
type semistar operation associated to $\star$} or \emph{the semistar operation of finite
type associated to $\star$}.  Note that, for each $F \in
\boldsymbol{f}(D)$, $F^\star=F^{\star_{\!_f}}$.  A semistar
operation $\star$ is called a \emph{semistar operation of finite
type} (or,  \emph{finite
type semistar operation})  if $\star=\star_{\!_f}$.  It is easy to see that
$(\star_{\!_f}\!)_{\!_f}=\star_{\!_f}$ (that is, $\star_{\!_f}$ is
of finite type).

If $\star_1$ and $\star_2$ are two semistar operations on $D$, we
say that $\star_1 \leq \star_2$ if $E^{\star_1} \subseteq
E^{\star_2}$, for each $E \in \Fb(D)$, equivalently, if
$\left(E^{\star_{1}}\right)^{\star_{2}} = E^{\star_2}=
\left(E^{\star_{2}}\right)^{\star_{1}}$, for each $E \in
\Fb(D)$.
Obviously, for each semistar operation  $\star$,
we
have $\star_{\!_f} \leq \star$. Let $d_D$ (or, simply, $d$)  be the {\it identity (semi)star operation on $D$.}
  Clearly, $d \leq \star$ for all semistar  operations $\star$ on $D$.

We say that a nonzero ideal $I$ of $D$ is a
\emph{quasi-$\star$-ideal} if $I^\star \cap D = I$, a
\emph{quasi-$\star$-prime  ideal} if it is a prime quasi-$\star$-ideal,
and a \emph{quasi-$\star$-maximal  ideal} if it is maximal in the set of
all   proper   quasi-$\star$-ideals. A quasi-$\star$-maximal ideal is  a
prime ideal. It is possible  to prove that each  proper   quasi-$\star_{_{\!
f}}$-ideal is contained in a quasi-$\star_{_{\! f}}$-maximal
ideal.  More details can be found in \cite[page 4781]{FL3}. We
will denote by $\QMax^{\star}(D)$  (respectively, $\QSpec^\star(D)$) the set of all
quasi-$\star$-maximal ideals  (respectively, quasi-$\star$-prime ideals) of $D$.
When $\star$ is a (semi)star operation, the notion of  quasi-$\star$-ideal coincides with the ``classical'' notion of  \it  $\star$-ideal \rm (i.e., a nonzero ideal $I$ such that $I^\star = I$).

\bigskip

We say that
$\,\star\,$ is
{\it a stable semistar operation on $\,D\,$} if
$$
(E \cap F)^\star = E^\star \cap F^\star,  \;\, \textrm {  for all} \;
E,F \in\FF(D) \,.
$$

For each $Q \in \Spec(D)$, let  $\texttt{s}_Q$ be the semistar operation (of finite type) on $D$ defined as follows, for each $E \in
\Fb(D)$:
$$
E^{\texttt{s}_Q} := ED_Q\,.
$$
Let $Y$ be a subset of $\Spec(D)$ and let
 $\texttt{s}_Y$ be the semistar operation  on $D$ defined as follows, for each $E \in
\Fb(D)$:
$$
E^{\texttt{s}_Y} := \bigcap \{ED_Q\mid Q \in Y\}\,.
$$
If $Y = \emptyset$, then $\texttt{s}_{\emptyset}$ is the trivial semistar operation defined by $
E^{\texttt{s}_{\emptyset}} := K$, for each $E \in
\Fb(D)$.

A semistar operation of the type $\texttt{s}_Y$, for some $Y \subseteq \Spec(D)$, is called a {\it spectral semistar operation} on $D$.
As a consequence of flatness, it is easy to see that any spectral semistar operation is stable.

It is obvious that  $Y' \subseteq Y'' (\subseteq \Spec(D))$ implies  ${\texttt{s}_{Y''}} \leq {\texttt{s}_{Y'}} $.

 Set $
\stt  :=
\texttt{s}_{\tiny{\QMax}^{\stf}(D)}$. The star operation
$\stt$ is called {\it the spectral semistar
operation of finite type associated to $\,\star\,$.}
It is known that  $\stt  \leq
\stf$ and $\QMax^{\stf}(D) = \QMax^{\stt}(D)$.
  Note that the finite type semistar operation associated to a stable semistar operation is not necessarily stable. On the other hand, it is well known that finite type stable semistar operations coincide with finite type spectral semistar operations (see, for instance, \cite[page 2952]{FiFoSp}).

 In the following, we collect some of the properties concerning the relation between $Y\subseteq \Spec(D)$ and $\texttt{s}_Y$. Let $Y^{\mbox{\tiny\texttt{gen}}} := \{ P\in \Spec(D) \mid  P\subseteq Q, \mbox{ for some } Q \in Y\}$ and let $\Clinv(Y)$ be the closure of $Y$ in the inverse topology of $\Spec(D)$, i.e., $\Clinv(Y):=\ \bigcap \{ D(J) \mid D(J) \supseteq Y, \,  J   \in \f(D)\}$.

\begin{lemma}\label{spectral}
Let $D$ be an integral domain and let $Y, Y', Y''\subseteq \Spec(D)$.
\begin{itemize}
\item[\rm(1)]  $\emph{\texttt{s}}_Y$ is of finite type if and only if $Y$ is quasi-compact.

\item[\rm(2)] $\emph{\texttt{s}}_{Y'} = \emph{\texttt{s}}_{Y''}  \Leftrightarrow  Y'^{\mbox{\tiny\emph{\texttt{gen}}}} = Y{''}^{\mbox{\tiny\emph{\texttt{gen}}}}$.

\item[\rm(3)] $\widetilde{ \emph{\texttt{s}}_{Y'} } = \widetilde{ \emph{\texttt{s}}_{Y''}} \Leftrightarrow  {\Clinvrm}(Y') = {\Clinvrm}(Y'')$.

\end{itemize}
%
%
%
%
%
\end{lemma}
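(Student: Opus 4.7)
The plan is to handle the three parts in order, with part (3) reducing to (2) via an auxiliary identity.

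For (1), the ``if'' direction is a standard compactness argument. Given $x \in E^{\texttt{s}_Y}$, for each $Q \in Y$ pick a finitely generated $F_Q \subseteq E$ with $x \in F_Q D_Q$; then $\{P \in \Spec(D) : x \in F_Q D_P\}$ equals the Zariski-open neighbourhood $D((F_Q :_D x))$ of $Q$, so a finite subcover yields finitely generated $F := F_{Q_1}+\cdots+F_{Q_n} \subseteq E$ with $x \in F^{\texttt{s}_Y}$. For the converse, given any Zariski-open cover $\{D(f_\alpha)\}$ of $Y$, form the (possibly not finitely generated) ideal $J := \sum_\alpha (f_\alpha)$; then $Y \subseteq D(J)$ translates to $1 \in J^{\texttt{s}_Y}$, and the finite-type hypothesis yields a finitely generated $F \subseteq J$ with $1 \in F^{\texttt{s}_Y}$, which corresponds to a finite subcover of $Y$.

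For (2), I first establish $\texttt{s}_Y = \texttt{s}_{Y^{\mbox{\tiny\texttt{gen}}}}$: the inclusion $\texttt{s}_{Y^{\mbox{\tiny\texttt{gen}}}} \leq \texttt{s}_Y$ comes from $Y \subseteq Y^{\mbox{\tiny\texttt{gen}}}$ (intersecting over more primes gives a smaller module), while the reverse uses that for each $P \in Y^{\mbox{\tiny\texttt{gen}}}$, picking $Q \in Y$ with $P \subseteq Q$ gives $D_Q \subseteq D_P$ and hence $\bigcap_{Q' \in Y} E D_{Q'} \subseteq E D_P$. This settles ``$\Leftarrow$''. For ``$\Rightarrow$'', I would characterize the nonzero quasi-$\texttt{s}_Y$-prime ideals: for a nonzero prime $P$, $P D_Q \cap D = P$ when $P \subseteq Q$ and $= D$ otherwise, so $P^{\texttt{s}_Y} \cap D = P$ precisely when $P \in Y^{\mbox{\tiny\texttt{gen}}}$. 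Thus $\texttt{s}_{Y'} = \texttt{s}_{Y''}$ forces equality of the quasi-spectra, and hence of the $Y^{\mbox{\tiny\texttt{gen}}}$'s.

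For (3), I reduce to (2) via the identity $\widetilde{\texttt{s}_Y} = \texttt{s}_{\Clinvrm(Y)}$. Granted this, since $\Clinvrm(Y)$ is closed under generalization (if $P \subseteq Q$ and a finitely generated $J$ satisfies $J \nsubseteq Q$, then $J \nsubseteq P$ a fortiori), applying (2) yields $\widetilde{\texttt{s}_{Y'}} = \widetilde{\texttt{s}_{Y''}} \Leftrightarrow \Clinvrm(Y')^{\mbox{\tiny\texttt{gen}}} = \Clinvrm(Y'')^{\mbox{\tiny\texttt{gen}}} \Leftrightarrow \Clinvrm(Y') = \Clinvrm(Y'')$. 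Since $\widetilde{\texttt{s}_Y} = \texttt{s}_{\QMax^{(\texttt{s}_Y)_f}(D)}$ and $\QMax^{(\texttt{s}_Y)_f}(D)^{\mbox{\tiny\texttt{gen}}} = \QSpec^{(\texttt{s}_Y)_f}(D)$ (every quasi-$(\texttt{s}_Y)_f$-prime lies in some quasi-$(\texttt{s}_Y)_f$-maximal ideal), another use of (2) reduces the identity to showing $\QSpec^{(\texttt{s}_Y)_f}(D) = \Clinvrm(Y)$ (up to the zero ideal). The easy direction is direct: if $P$ is a quasi-$(\texttt{s}_Y)_f$-prime and some finitely generated $J \subseteq P$ satisfies $Y \subseteq D(J)$, then $1 \in J^{\texttt{s}_Y} \subseteq P^{(\texttt{s}_Y)_f} \cap D = P$, a contradiction, so $P \in \Clinvrm(Y)$.

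The main obstacle is the converse: assume $P \in \Clinvrm(Y)$ but $P$ fails to be a quasi-$(\texttt{s}_Y)_f$-prime. Then there exist a finitely generated $J \subseteq P$ and an element $d \in (J^{\texttt{s}_Y} \cap D) \setminus P$; the ideal $I := (J :_D d)$ lies in $P$ (because $ad \in J \subseteq P$ with $d \notin P$ forces $a \in P$) and satisfies $Y \subseteq D(I) = \bigcup_{a \in I} D(a)$, but $I$ need not be finitely generated. The crux is to extract a finitely generated $J' \subseteq I$ with $Y \subseteq D(J')$, which then gives $J' \subseteq P$, directly contradicting $P \in \Clinvrm(Y)$. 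I expect to accomplish this by leveraging the inverse-topology quasi-compactness of $\Clinvrm(Y)$ (as a closed subset of the inverse-spectral space $\Spec(D)$) together with the fact that each $D(a)$ is Zariski-quasi-compact, hence inverse-closed, so that the cover $\{D(a) : a \in I\}$ admits a suitable finite reduction.
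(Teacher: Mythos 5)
Your arguments for parts (1) and (2) are correct and essentially self-contained (the paper itself offers no proof of this lemma, deferring entirely to \cite{FiSp} and \cite{FiFoSp}): the compactness argument with the opens $D((F_Q:_Dx))$, the identity $\texttt{s}_Y=\texttt{s}_{Y^{\mbox{\tiny\texttt{gen}}}}$, and the characterization of the nonzero quasi-$\texttt{s}_Y$-primes as exactly the nonzero elements of $Y^{\mbox{\tiny\texttt{gen}}}$ all check out. The reduction of (3) to the identity $\widetilde{\texttt{s}_Y}=\texttt{s}_{\Clinvrm(Y)}$ via (2) is also sound, as is the ``easy'' inclusion $\QSpec^{(\texttt{s}_Y)_f}(D)\subseteq\Clinvrm(Y)$.

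The step you flag as the crux is, however, a genuine gap, and the tool you propose cannot close it. You need: if $I=(J:_Dd)$ satisfies $I\subseteq P$ and $Y\subseteq D(I)$, then some finitely generated $J'\subseteq I$ has $Y\subseteq D(J')$. Quasi-compactness of $\Clinvrm(Y)$ only lets you refine open covers \emph{of} $\Clinvrm(Y)$; but in your situation $P\in\Clinvrm(Y)$ and $I\subseteq P$, so $P\notin D(I)$ and the family $\{D(a)\}_{a\in I}$ does \emph{not} cover $\Clinvrm(Y)$ --- there is nothing to refine. Covering only $Y$ does not help either, since $Y$ is arbitrary (indeed, ``every cover $\{D(a)\}_{a\in I}$ of $Y$ refines to a finite one'' is exactly quasi-compactness of $Y$, which fails in general); and the inverse-closedness of each $D(a)$ is useless because an infinite union of inverse-closed sets need not be inverse-closed. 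A second, smaller inaccuracy: what you actually need is $\Clinvrm(Y)\subseteq\bigl(\QMax^{(\texttt{s}_Y)_f}(D)\bigr)^{\mbox{\tiny\texttt{gen}}}$, and replacing $\bigl(\QMax^{(\texttt{s}_Y)_f}(D)\bigr)^{\mbox{\tiny\texttt{gen}}}$ by $\QSpec^{(\texttt{s}_Y)_f}(D)$ silently assumes that every nonzero prime below a quasi-$(\texttt{s}_Y)_f$-maximal ideal is itself a quasi-$(\texttt{s}_Y)_f$-prime, which is not automatic. The standard way to finish (and in substance what \cite{FiFoSp} does) avoids this pointwise claim altogether: $\Clinvrm(Y)$ is patch-closed, hence quasi-compact in the Zariski topology, so by your part (1) $\texttt{s}_{\Clinvrm(Y)}$ is a finite-type, stable (spectral) semistar operation with $\texttt{s}_{\Clinvrm(Y)}\leq\texttt{s}_Y$; since $\widetilde{\texttt{s}_Y}$ is the \emph{largest} finite-type stable semistar operation bounded above by $\texttt{s}_Y$, one gets $\texttt{s}_{\Clinvrm(Y)}\leq\widetilde{\texttt{s}_Y}$, and your easy inclusion supplies the reverse inequality. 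As written, part (3) of your proof is incomplete.
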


For the proof, see \cite[Corollary 4.4 and Proposition 5.1]{FiSp} and \cite[Proposition 4.1]{FiFoSp}.

\medskip

A generalization of the classical Nagata ring construction was
considered by  {Kang} (1987 \cite{K-87} and 1989
\cite{K-89}).
This construction   has been generalized to the semistar setting:
Given {{\sl any} integral
domain} $\,D\,$ and
{{\sl any}
semistar operation $\,\star\,$} on
$\,D\,,$ \ we define {\it the semistar Nagata ring \rm
} as
follows:
$$
{\textstyle\rm Na}(D,\star) :=\left\{f/g \,  \mid \, f, g \in
D[X]
\,, \; g\neq 0\,,\; {\boldsymbol
c}(g)^\star =D^\star \right\}\,.
$$

Note that $\,{\textstyle\rm Na}(D,\star) =
{\textstyle\rm
Na}(D,\stf)$\,.\
Therefore, the assumption $\,\star = \stf\,$ is not really
restrictive when
considering semistar Nagata rings.
\medskip

If $\,\star = d\,$ is the identity
(semi)star operation  on $D$, then:
$$
{\textstyle\rm Na}(D,d) = D(X)\,.
$$

	Some results on \sl star \rm Nagata rings proved by
{Kang} in 1989 are
generalized to the semistar setting in the following:

\begin{lemma}\label{lemma-Na}
 Let $\,\star\,$ be a nontrivial semi\-star
operation   on an integral domain $\,D\,$. \ Set:\\
\centerline{
 $N(\star) := N_D(\star) := \{h \in D[X] \, |\;
\boldsymbol{c}(h)^\star =
 D^\star \}$\,.}
$\mbox{  }$

\begin{itemize}
\item[\rm(1)] $N(\star) = D[X] \setminus \bigcup \{Q[X] \; | \;\, Q
\in
\QMax^{\stf}(D) \}\,$ is a saturated multiplicatively closed
subset of
$\,D[X]\,$ and $\, N(\star) = N(\stf)\,$.

\item[\rm(2)] \Max$(D[X]_{N(\star)}) = \{Q[X]_{N(\star)} \; |\;\,
Q
\in
\QMax^{\stf}(D)\}\,.$

\item[\rm(3)] ${\textstyle\rm Na}(D,\star) = D[X]_{N(\star)} =
\bigcap\{ D_Q(X) \,| \, Q \in
 \QMax^{\stf}(D) \}\,.$

\item[\rm(4)]  $\QMax^{\stf}(D)$ coincides with the canonical
image  in
$\Spec(D)$   of the maximal  spectrum of
$\,{\textstyle\rm Na}(D,\star)\,;$ \,
  \sl i.e.,
$\QMax^{\stf}(D) = \{ M \cap D \, | \, M \in $
\Max$({\textstyle\rm Na}(D,\star)) \}\,. $

\item[\rm(5)]   For each $E \in \FF(D)$, $E^{\tilde{\star}} = E{\textstyle\rm Na}(D,\star)
\cap K$\,.

\end{itemize}
\end{lemma}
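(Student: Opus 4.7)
The proof proceeds by bootstrapping from part (1), which sets up the key identifications. Since $\boldsymbol{c}(h) \in \f(D)$, we have $\boldsymbol{c}(h)^{\star}=\boldsymbol{c}(h)^{\stf}$ and $D^\star = D^{\stf}$, so $N(\star)=N(\stf)$ is immediate. The complement description rests on the standard equivalence (from the background recalled in the paper): a finitely generated ideal $I$ of $D$ satisfies $I^{\stf}=D^{\stf}$ if and only if $I$ is not contained in any quasi-$\stf$-maximal ideal. Thus $h\in N(\star)$ iff $\boldsymbol{c}(h)\nsubseteq Q$ for every $Q\in\QMax^{\stf}(D)$, iff $h\notin Q[X]$ for every such $Q$. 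Multiplicative closure and saturation are then handled uniformly by the Dedekind--Mertens identity $(\boldsymbol{c}(f)\boldsymbol{c}(g))^{\stf}=\boldsymbol{c}(fg)^{\stf}$: if $\boldsymbol{c}(f)\subseteq Q$ for some $Q\in\QMax^{\stf}(D)$, then $\boldsymbol{c}(fg)\subseteq Q$, contradicting $fg\in N(\star)$.

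For part (2), the plan is to show that every prime $P$ of $D[X]$ disjoint from $N(\star)$ is contained in $Q[X]$ for some $Q\in\QMax^{\stf}(D)$. Setting $I:=\sum_{h\in P}\boldsymbol{c}(h)$, the key step is to show $I^{\stf}\neq D^{\stf}$. Otherwise, finitely many contents $\boldsymbol{c}(h_1),\dots,\boldsymbol{c}(h_n)$ would already sum to an ideal with $\star$-closure equal to $D^\star$; choosing exponents $e_1<e_2<\dots<e_n$ with $e_{i+1}>e_i+\deg(h_i)$ so that $h:=\sum_i X^{e_i}h_i$ has $\boldsymbol{c}(h)=\sum_i\boldsymbol{c}(h_i)$, we would obtain $h\in P\cap N(\star)$, a contradiction. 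Hence $I\subseteq Q$ for some $Q\in\QMax^{\stf}(D)$, so $P\subseteq Q[X]$. Conversely each $Q[X]$ is a prime ideal of $D[X]$ disjoint from $N(\star)$ (since $\boldsymbol{c}(Q[X])=Q$), and these primes contract to distinct ideals of $D$; so by the correspondence of primes under localization, $\Max(D[X]_{N(\star)})=\{Q[X]_{N(\star)}: Q\in\QMax^{\stf}(D)\}$.

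Parts (3) and (4) follow quickly. The first equality in (3) is definitional; for the second, intersect $D[X]_{N(\star)}$ over its maximal ideals, using the standard identification $(D[X]_{N(\star)})_{Q[X]_{N(\star)}}=D[X]_{Q[X]}=D_Q(X)$. For (4), by (2) each maximal ideal of $\Na(D,\star)$ is $Q[X]_{N(\star)}$ for a unique $Q\in\QMax^{\stf}(D)$, and its contraction to $D$ is $Q[X]\cap D=Q$.

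The most delicate part is (5). Since $\stt$ is the spectral semistar operation associated to $\QMax^{\stf}(D)$, we have $E^{\stt}=\bigcap_{Q\in\QMax^{\stf}(D)}ED_Q$. One inclusion uses $ED_Q(X)\cap K=ED_Q$ together with (3) to get $E\Na(D,\star)\cap K\subseteq\bigcap_{Q}ED_Q=E^{\stt}$. The reverse inclusion is the main obstacle and requires the same polynomial-building trick as in (2): given $z\in\bigcap_{Q}ED_Q$, the ideal $J:=(E:_D z)=\{d\in D\mid dz\in E\}$ is not contained in any $Q\in\QMax^{\stf}(D)$, so $J^{\stf}=D^{\stf}$. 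Pick $j_1,\dots,j_n\in J$ with $(j_1,\dots,j_n)^{\star}=D^{\star}$ and set $g:=j_1+j_2X+\cdots+j_nX^{n-1}\in N(\star)$; then $gz\in E[X]$, hence $z=(gz)/g\in E\,\Na(D,\star)$, and since $z\in K$ this places $z$ in $E\Na(D,\star)\cap K$. The technical care needed is to ensure that the coefficients of $g$ can simultaneously witness membership in $J$ and generate an ideal with full $\star$-closure, which is exactly what the quasi-maximal characterization from (1) delivers.
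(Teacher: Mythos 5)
Your proposal is essentially correct, and it reconstructs the argument that the paper itself does not write out (Lemma~\ref{lemma-Na} is proved there only by citation to [FL3, Theorem 3.1 and Proposition 3.4], whose proofs, following Kang, run along exactly the lines you give): the description of $N(\star)$ as the complement of $\bigcup\{Q[X]\mid Q\in\QMax^{\stf}(D)\}$, the exponent--spreading polynomial trick for (2), localization at the maximal ideals for (3) and (4), and the $(E:_D z)$--plus--witnessing--polynomial argument for the hard inclusion in (5), which is indeed the delicate point and which you handle correctly.

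One assertion in your part (1) needs repair: the identity $(\boldsymbol{c}(f)\boldsymbol{c}(g))^{\stf}=\boldsymbol{c}(fg)^{\stf}$ is not the Dedekind--Mertens lemma and is false for a general semistar operation; for $\star=d$ it reduces to the Gaussian content formula $\boldsymbol{c}(f)\boldsymbol{c}(g)=\boldsymbol{c}(fg)$, which already fails in $k[a,b]$ with $f=a+bX$, $g=a-bX$. Moreover, the implication you actually display (``$\boldsymbol{c}(f)\subseteq Q$ implies $\boldsymbol{c}(fg)\subseteq Q$'') follows from the trivial inclusion $\boldsymbol{c}(fg)\subseteq\boldsymbol{c}(f)\boldsymbol{c}(g)$ and only yields saturation; it does not give multiplicative closure, for which you need the converse direction, namely that $\boldsymbol{c}(fg)\subseteq Q$ forces $\boldsymbol{c}(f)\subseteq Q$ or $\boldsymbol{c}(g)\subseteq Q$. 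This is where the genuine Dedekind--Mertens identity $\boldsymbol{c}(f)^{m+1}\boldsymbol{c}(g)=\boldsymbol{c}(f)^{m}\boldsymbol{c}(fg)$ (with $m=\deg g$) enters, since $Q$ is prime. Even more cheaply: once $N(\star)=D[X]\setminus\bigcup\{Q[X]\mid Q\in\QMax^{\stf}(D)\}$ is established, $N(\star)$ is the complement of a union of prime ideals of $D[X]$ and is therefore automatically a saturated multiplicatively closed set, with no content formula needed at all. With that substitution the proof is complete.
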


For the proof see \cite[Theorem 3.1 and Proposition 3.4]{FL3}.

\medskip

\rm  If {$\,\star\,$ is {\sl
any} semistar
operation}  on {\sl any}  integral domain
$\,D\,$,\
then we define {\it the Kronecker function ring
of $\,D\,$
with respect to the semistar operation $\,\star\,$} \rm by:
$$
\begin {array} {rl}
\mbox{Kr}(D,\star) := \{ f/g  \, \ |& \, f,g \in D[X], \ g
\neq 0,
\;
\mbox{ and there exists }\\ & \hskip -4pt h \in D[X] \setminus \{0\}
\; \mbox{
with } (\boldsymbol{c}(f)\boldsymbol{c}(h))^\star
\subseteq (\boldsymbol{c}(g)\boldsymbol{c}(h))^\star \,\}.
\end{array}
$$

 This definition, given in  \cite[Theorem 5.1]{FL1},  leads to a natural extension
of the ``classical'' Kronecker function ring.
In order to relate this general construction with the  Kronecker function ring as defined by Krull  (see, for instance, \cite[page 401]{G}), we recall that it is possible to associate to an
arbitrary semistar ope\-ration $\star$ an \texttt{eab} semistar operation of finite type as follows, for each $ F \in \boldsymbol{f}(D)$ and
for each  $E \in {\overline{\boldsymbol{F}}}(D)\,:
$
$$
\begin {array} {ll}
F^{\star_a} &:= \cup\{((FH)^\star:H^\star) \; \ | \; \, \; H \in
\boldsymbol{f}(D)\}\,, \\
E^{\star_a} &:= \cup\{F^{\star_a} \; | \; \, F \subseteq E\,,\; F \in
\boldsymbol{f}(D)\}\,,
\end{array}
$$
 \rm(for the definition of \texttt{eab} operation see  \cite[page 394]{G} and \cite[Definition 2.3]{HK1}).
 The previous
construction is essentially due to P. Jaffard
(1960)    \cite[Chapitre II, \S 2]{J}
and {F. Halter-Koch} (1997, 1998)  \cite[Section 6]{HK1},
\cite[Chapter 19]{HK2}.

\smallskip

Obviously,
 $(\star_{_{\!f}})_{a}= \star_{a}$  and so,
when $\star =
\star_{_{\!f}}$, then $\star$ is \texttt{eab} if and only if $\star =
\star_{a} $  \cite[Proposition 4.5]{FL1}.

\medskip

Let  $D$ be an integral domain  and set $\Zar(D):= \{ V\mid V \mbox{ is a valuation overring of }$ $ D \}$. It is well known that $\Zar(D)$ can be equipped with   the {\it Zariski topology}, i.e., the topology having, as subbasic open subspaces, the subsets $\Zar(D[x])$ for $x$ varying in $K$.  The set  $\Zar(D)$, endowed with the Zariski topology, is often called the  {\it Riemann-Zariski space of }$D$  \cite[Chapter VI, \S 17, page 110]{ZS}.
Since it is a spectral space, $\Zar(D)$ can be also endowed with the inverse topology, that is the topology having as a subbasis for the closed sets the quasi-compact open subspaces in the Zariski topology.

Given a family $\mathscr{V}$ of valuation overrings of an integral domain $D$, we can define a semistar operation $\wedge_{\mathscr{V}}$ on $D$, by setting
$
E^{\wedge_{\mathscr{V}} }:= \bigcap \{EV\mid V \in \mathscr{V} \}$ for each $E \in \Fb(D)$.
Clearly,  $\wedge_{\mathscr{V}}$  is an \texttt{eab} semistar operation and it is known that ${\wedge_{\mathscr{V}} }$ is a semistar operation of finite type if and only if $\mathscr{V}$ is a quasi-compact subspace of $\Zar(D)$ \cite[Proposition 4.5]{FiSp}.

\medskip

 We also need to recall the notion of $\star$-valuation overring, considered by P. Jaffard
(1960)   \cite[page 46]{J} (see also  Halter-Koch  (1997)
\cite[Chapters 15 and 18]{HK2}).

For a domain
$\,D\,$ and a semistar operation $\,\star\,$ on $\,D\,$, \ we say
that
a
valuation overring $\,V\,$ of $\,D\,$ is {\it a
$\,\star$--valuation overring of $\,D\,$} \rm provided $\,F^\star
\subseteq FV\,,$ \ for each $\,F \in \boldsymbol{f}(D)\,$.   Note
that,
by definition the $\,\star$--valuation overrings coincide with the
$\stf$--valuation overrings.

We collect in the following lemma some properties needed later.

\begin{lemma}\label{Kr} \sl Let $\,\star\,$ be a semi\-star
 operation on
an integral domain $\,D\,$ with quotient field $\,K\,.$ Then:
\begin{itemize}
\item[{\rm (1)}] $\Na(D,\star) \subseteq
\Kr(D,\star)\,.$

\item[{\rm (2)}]  $V$ is a $\star$--valuation overring of $D$ if and
only if $V(X)$ is a valuation overring of $\Kr(D, \star)$.
\newline The map \
$W \mapsto W \cap K$ \ establishes a bijection between the set of all
valuation overrings of $\Kr(D, \star)$ and the set of all the
$\star$--valuation overrings of $D$.

\item[{\rm (3)}]  ${\textstyle\rm Kr}(D,\star) = {\textstyle\rm
Kr}(D,\stf) = {\textstyle\rm Kr}(D,\star_a) = \cap \{V(X) \mid
V$
is a $\star$--valuation
overring of $ D\}\,$ is a B\'ezout domain
with
quotient field $\,K(X)\,$.

\item[{\rm (4)}]  $E^{\star_a} = E{\Kr}(D,\star) \cap
K = \cap \{EV \mid  V   \mbox{is a $\star$--valuation
overring of }$ $ D\}\,,$ for each $\,E \in
\boldsymbol{\overline{F}}(D)\,.$
\end{itemize}
\end{lemma}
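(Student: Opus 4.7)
The plan is to establish the four parts in the order (1), (3), (2), (4), since the structural results in (3)---the B\'ezout property and the intersection representation---underlie both (2) and (4). All four assertions are semistar generalizations of classical Kronecker function ring theorems due to Krull, Jaffard, and Halter--Koch, so the proofs should follow those templates while tracking the distinction between $D$ and $D^\star$ and exploiting that the defining condition of $\Kr(D,\star)$ depends only on $\star_a$.

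For (1), given $f/g \in \Na(D,\star)$ with $\boldsymbol{c}(g)^\star = D^\star$, I take $h = 1 \in D[X]$; then $(\boldsymbol{c}(f)\boldsymbol{c}(h))^\star = \boldsymbol{c}(f)^\star \subseteq D^\star = \boldsymbol{c}(g)^\star = (\boldsymbol{c}(g)\boldsymbol{c}(h))^\star$, so $f/g \in \Kr(D,\star)$. For (3), I would first establish $\Kr(D,\star) = \Kr(D,\stf) = \Kr(D,\star_a)$: the first equality follows by unfolding $\stf$ applied to the finitely generated contents $\boldsymbol{c}(f)$, $\boldsymbol{c}(g)$, $\boldsymbol{c}(h)$; the second from the fact that $(\stf)_a = \star_a$ is \texttt{eab} of finite type, together with the observation that the defining inclusion of $\Kr$ is precisely what the \texttt{eab} property encodes. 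The B\'ezout property with quotient field $K(X)$ then follows by a Dedekind--Mertens argument reducing every finitely generated ideal of $\Kr(D,\star)$ to a principal one. The intersection representation $\Kr(D,\star) = \bigcap \{V(X) : V \text{ is a } \star\text{-valuation overring of } D\}$ comes from identifying $\star_a$ with $\wedge_{\mathscr{V}}$ for $\mathscr{V}$ the set of $\star$-valuation overrings of $D$, and passing from $D$ to $K(X)$.

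Part (2) then follows because, by (3), $\Kr(D,\star)$ is B\'ezout with quotient field $K(X)$, so each valuation overring $W$ of $\Kr(D,\star)$ is determined by its trace $W \cap K$, which must be a valuation overring of $D$; conversely the inclusion $\Kr(D,\star) \subseteq V(X)$ for each $\star$-valuation overring $V$ is immediate from the intersection representation, and one checks that $V(X)$ is then a valuation overring of $\Kr(D,\star)$ with trace $V$. Part (4) is a direct corollary: combining the intersection formula in (3) with the correspondence in (2) gives $E \Kr(D,\star) \cap K = \bigcap \{EV\}$ where $V$ ranges over $\star$-valuation overrings, and the identification of this with $E^{\star_a}$ follows from the \texttt{eab} finite-type nature of $\star_a$.

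The main obstacle will be the nontrivial direction of the intersection representation in (3), namely $\bigcap V(X) \subseteq \Kr(D,\star)$: this requires a valuative criterion showing that the \texttt{eab} content inclusion defining $\Kr$ holds in $D$ with respect to $\star_a$ precisely when the corresponding inclusion of contents holds in every $\star$-valuation overring $V$. This is the content of Jaffard's approximation construction and is what ultimately lets one collapse $\star$, $\stf$, and $\star_a$ to the same Kronecker function ring.
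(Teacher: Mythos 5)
Your outline reproduces the standard Krull--Jaffard--Halter-Koch development that the paper itself relies on: for this lemma the authors give no argument at all but simply cite \cite[Theorem 3.11]{FL1}, \cite[Theorem 3.5]{FL2} and \cite[Proposition 4.1]{FL3}, and those sources prove the four statements by essentially the reductions you describe (the $h=1$ content computation for (1), reduction of the defining condition of $\Kr(D,\star)$ to the single inclusion $\boldsymbol{c}(f)^{\star_a}\subseteq \boldsymbol{c}(g)^{\star_a}$ together with the content formula $f\Kr(D,\star)=\boldsymbol{c}(f)\Kr(D,\star)$ for (3), the trivial-extension description of valuation overrings for (2), and the passage from finitely generated to arbitrary modules via the Pr\"ufer property of $\Kr(D,\star)$ for (4)). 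The only point worth tightening is in (2): that a valuation overring $W$ of $\Kr(D,\star)$ equals $(W\cap K)(X)$ does not follow from the B\'ezout property alone, but from the content formula, which forces the associated valuation to be the Gauss extension of its restriction to $K$.
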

\vskip -4pt For the proof see  \cite[Theorem 3.11]{FL1}, \cite[Theorem
3.5]{FL2}, \cite[Proposition 4.1]{FL3}.

\bigskip

 Let $\star$ be a
semistar operation on an integral domain $D$.
Note that, from the previous Lemma \ref{Kr}(4), it follows that $\star_a= \wedge_{\mathscr{V}^\star(D)}$, where $\mathscr{V}^\star(D)$ denotes  the set of all $\star$--valuation
overrings of  $ D$.

\smallskip

 Recall that a \it Pr\"ufer
$\star$--multiplication domain
 \rm (for short, a \it  P$\star$MD\rm ) \ is an integral domain
$D$ such that $ (FF^{-1})^{\star_{_{\!f}}} =
D^{\star_{_{\!f}}} \ (= D^\star)$\;  (i.e.,\  $F$ is \it
 ${\star_{_{\!f}}}$--invertible\rm ) for each $F \in \boldsymbol{f}(D)$. Clearly, the notions of P$\star$MD and P$\stf$MD coincide and, given   two semistar operations $\star_1 \leq \star_2$ on $D$, if $D$ is a P$\star_1$MD then $D$ is also  a P$\star_2$MD.

In the following lemma, we collect some properties of Pr\"ufer
$\star$--multiplication domains.

\begin{lemma} \label{Na=Kr}
Let
$D$ be an integral
domain   with quotient field $K$ and $\star$ a semistar operation on $D$.
\begin{enumerate}
\item[(1)]
 The following are
equivalent:
\begin{enumerate}
\item[{\rm (i)}] $D$ is a P$\star$MD.

\item[{\rm (ii)}]
$\Na(D,\star)$ is a Pr\"{u}fer domain.

\item[{\rm (iii)}]
$\Na(D,\star)=\Kr(D,{\star})$\,.

\item[{\rm (iv)}]
$\tilde{\star} =\star_{a}$\,.

\item[{\rm (v)}]
${\star_{_{\!f}}} $ is stable and \texttt{eab}.
\item [{\rm (vi)}] Each ideal of $\Na(D,\star)$ is extended from an ideal of $D$.
\end{enumerate}
\noindent In particular, if $D$ is a P$\star$MD, then ${\star_{_{\!f}}}=\tilde{\star}$ and so  $D$ is a P$\star$MD if and only if it is a
P$\tilde{\star}$MD.

\item[{(2)}] Assume that $D$ is a P$\star$MD.  The contraction map $\Spec(\Na(D,\star)) \to \linebreak \QSpec^{\stf}(D)$,
 $Q \mapsto Q \cap D$, is a bijection with inverse map $P \mapsto P\Na(D,\star)$.

\end{enumerate}
\end{lemma}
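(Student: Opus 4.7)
The strategy is to assemble the equivalences in part (1) through a cycle of implications supported by Lemmas \ref{spectral}, \ref{lemma-Na}, and \ref{Kr}, most of whose content is already in the literature. The centerpiece (i) $\Leftrightarrow$ (ii) is the semistar extension of Kang's theorem, namely \cite[Theorem 3.1]{FJS-03}; in one direction, $\stf$-invertibility of each $F \in \boldsymbol{f}(D)$ localizes via the content map (Lemma \ref{lemma-Na}(1),(3)) to invertibility of $F\Na(D,\star)$, and conversely invertibility in $\Na(D,\star)$ contracts back through the content to $\stf$-invertibility in $D$. I would then close the circle via (i) $\Rightarrow$ (v) $\Leftrightarrow$ (iv) $\Rightarrow$ (iii) $\Rightarrow$ (ii), with (vi) grafted onto (ii).

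For (i) $\Rightarrow$ (v): $\stf$-invertibility, localized at each $Q \in \QMax^{\stf}(D)$, forces $D_Q$ to be a valuation domain; consequently $\stf = \stt$ (so $\stf$ is stable), and the \texttt{eab} property is inherited from the valuation overrings. The equivalence (iv) $\Leftrightarrow$ (v) is immediate from the universal chain $\stt \leq \stf \leq \sta$: both (iv) and (v) are equivalent to the collapse of this chain to equalities (using that finite-type stable operations are spectral by the remark following Lemma \ref{spectral}, and that a finite-type operation $\stf$ is \texttt{eab} iff $\stf = \sta$). For (iv) $\Rightarrow$ (iii): Lemma \ref{lemma-Na}(3) presents $\Na(D,\star)$ as $\bigcap\{D_Q(X) \mid Q \in \QMax^{\stf}(D)\}$, and under (iv) each $D_Q$ is a $\star$-valuation overring, so Lemma \ref{Kr}(3) identifies this intersection with $\Kr(D,\star)$. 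Then (iii) $\Rightarrow$ (ii) is immediate since $\Kr(D,\star)$ is B\'ezout (Lemma \ref{Kr}(3)). The equivalence (ii) $\Leftrightarrow$ (vi) is the classical Kang-type observation that the Nagata ring $D[X]_{N(\star)}$ is Pr\"ufer iff it is B\'ezout iff every one of its ideals is extended from $D$ via the content. The ``in particular'' assertion is immediate from (iv): the chain $\stt \leq \stf \leq \sta$ collapses to $\stf = \stt$, whence P$\star$MD $\Leftrightarrow$ P$\tilde{\star}$MD, since $\stt$ is of finite type.

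For part (2), Lemma \ref{lemma-Na}(3) presents $\Na(D,\star) = D[X]_{N(\star)}$, so its primes correspond bijectively to primes $\mathfrak{q} \subseteq D[X]$ disjoint from $N(\star)$. The main obstacle is eliminating proper uppers to primes of $D$, i.e., primes $\mathfrak{q}$ with $\mathfrak{q} \supsetneq (\mathfrak{q} \cap D)[X]$; this is exactly what (vi) supplies, forcing $\mathfrak{q} = (\mathfrak{q} \cap D)[X]$. Setting $P := \mathfrak{q} \cap D$, the description of $N(\star)$ in Lemma \ref{lemma-Na}(1) turns the disjointness $P[X] \cap N(\star) = \emptyset$ into the condition that $P$ lies in some $Q \in \QMax^{\stf}(D)$, that is, $P \in \QSpec^{\stf}(D)$. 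The maps $P \mapsto P\Na(D,\star)$ and $Q \mapsto Q \cap D$ are then mutually inverse by construction.
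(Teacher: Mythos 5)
Your architecture for part (1) matches the paper's in outline: the block (i)--(v) is ultimately deferred to \cite[Theorem 3.1 and Remark 3.1]{FJS-03} (which is exactly what the paper does), and the genuinely new content is condition (vi) and part (2). The genuine gap is at (vi). You dispose of (ii) $\Leftrightarrow$ (vi) as ``the classical Kang-type observation,'' but the implication (vi) $\Rightarrow$ (ii) --- that extension of every ideal of $\Na(D,\star)$ from $D$ forces $\Na(D,\star)$ to be Pr\"ufer --- is precisely the implication the paper works hardest to establish, and you give neither an argument nor a precise citation for it. The paper's proof: given a nonzero finitely generated ideal $A=I\Na(D,\star)$ with $I$ finitely generated in $D$, choose $f\in D[X]$ with $\co(f)=I$, use (vi) a second time to write $f\Na(D,\star)=J\Na(D,\star)$ for an ideal $J$ of $D$, localize at an arbitrary maximal ideal $Q\Na(D,\star)$ with $Q\in\QMax^{\stf}(D)$ to get $fD_Q(X)=JD_Q(X)$, and invoke D.~D. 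Anderson's theorem on $R(X)$ \cite[Theorem 1]{a} to conclude $AD_Q(X)=\co(f)D_Q(X)=fD_Q(X)$; hence $A$ is locally principal, so invertible. The companion implication (iii) $\Rightarrow$ (vi) comes from the content formula $f\Kr(D,\star)=\co(f)\Kr(D,\star)$ \cite[Lemma 2.5(e)]{FJS-03}. Without some such argument your equivalence does not close at (vi). A smaller soft spot: in (iv) $\Rightarrow$ (iii) you assert that $\tilde{\star}=\star_a$ makes each $D_Q$, $Q\in\QMax^{\stf}(D)$, a $\star$-valuation overring; that is true but is essentially the content of (iv) $\Rightarrow$ (i) and is not formal from the equality of two closure operations on $D$.

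Your part (2) is a legitimate alternative to the paper's argument, and arguably cleaner for surjectivity: the paper realizes $P\Na(D,\star)$ as a prime by exhibiting $D_P(X)$ as a localization of $\Na(D,\star)$ through the valuation overring $D_M(X)$, whereas you work directly with primes of $D[X]$ disjoint from $N(\star)$ and use (vi) to exclude uppers; both are sound. One step you should make explicit: disjointness of $P[X]$ from $N(\star)$ translates, via $N(\star)=D[X]\setminus\bigcup\{Q[X]\mid Q\in\QMax^{\stf}(D)\}$, into ``$P$ is contained in some $Q\in\QMax^{\stf}(D)$,'' and identifying this with ``$P\in\QSpec^{\stf}(D)$'' is not a definition --- it uses the P$\star$MD hypothesis through $\stf=\tilde{\star}$, since for $P\subseteq Q$ one has $P^{\tilde{\star}}\cap D\subseteq PD_Q\cap D=P$. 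With that line added, part (2) stands; part (1)(vi) is where the real work is missing.
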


\begin{proof}
\vskip -4pt For the proof of  (1) (i)-(v),  cf.   \cite[Theorem 3.1 and Remark
3.1]{FJS-03}.

 Assume that (iii) holds, let $J$ be an ideal of $\Na(D,\star)$, and write $J=I\Na(D,\star)$ for some ideal $I$ of $D[X]$.  We claim that $J= \boldsymbol{c}(I)\Na(D,\star)$.  To verify this, let $f \in I$.  By \cite[Lemma 2.5(e)]{FJS-03},
 $\boldsymbol{c}(f)\Na(D,\star)=\boldsymbol{c}(f)\Kr(D,\star)=f\Kr(D,\star)=f\Na(D,\star) \subseteq J$.  It follows that $\boldsymbol{c}(I)\Na(D,\star) \subseteq J$.  The reverse inclusion is clear.  Thus (iii) implies (vi).

 Now assume (vi), and let $A$ be a  nonzero finitely generated ideal of $\Na(D,\star)$.  Then by assumption we may write $A=I\Na(D,\star)$ for some ideal $I$ of $D$, and we may assume that $I$ is finitely generated.  Choose $f \in D[X]$ with $\boldsymbol{c}(f)=I$. Again by assumption, we may write $f\Na(D,\star)=J\Na(D,\star)$ for some ideal $J$ of $D$.  Now let $M$ be a maximal ideal of $\Na(D,\star)$; we have $M=Q\Na(D,\star)$ for some $Q \in \QMax^{\stf}(D)$ (Lemma~\ref{lemma-Na}).  Localizing at $M$, we obtain $fD_Q(X)=JD_Q(X)$.  By \cite[Theorem 1]{a}, we must then have $AD_Q(X)=\boldsymbol{c}(f)D_Q(X)=fD_Q(X)$.  Hence $A$ is locally principal and therefore invertible.  It follows that $\Na(D,\star)$ is a Pr\"ufer domain, and we have (vi) implies (ii).

(2)  Let $Q \in {\Spec}(\Na(D,\star))$. Then $Q=(Q\cap D)\Na(D, \star)$ and $(Q\cap D)^{\stf}\cap D=(Q\cap D)^{\tilde\star}\cap D=(Q\cap D)\Na(D, \star)\cap K\cap D=Q\cap D$. 
Thus $Q\cap D\in \QSpec^{\stf}(D)$. Conversely, let $P\in \QSpec^{\stf}(D)$. Then $P\subseteq M$ for some $M\in \QMax^{\stf}(D)$ and $M\Na(D, \star)\in \Max(\Na(D, \star))$. Therefore, $\Na(D, \star)_{M{\tiny \Na}(D, \star)}=D_M(X)$ is a valuation overring of ${\Na}(D, \star)$. Since $D_P(X)$ is an overring of $D_M(X)$ and hence of $\Na(D, \star)$, $D_P(X)=\Na(D, \star)_Q$ for some prime ideal $Q$ of $\Na(D, \star)$. In fact, $Q=PD_P(X)\cap \Na(D, \star)$.   On the other hand, since $Q=(Q\cap D)\Na(D, \star)$, we have $Q=P\Na(D, \star)$, and hence $P\Na(D, \star)\in \Spec(\Na(D, \star))$.  
\end{proof}

Note that the statements of the previous lemma
gene\-ra\-li\-ze
some of the classical
characterizations of Pr\"ufer $v$--multiplication domains
(for
short,  P$v$MD's); for the appropriate references see \cite{FJS-03}.

\bigskip

From Lemmas  \ref{Kr} and \ref{Na=Kr}  we obtain the following.

\begin{corollary}\label{val-star-prime}
Let $D$ a P$\star$MD with quotient field $K$. We denote by $\mathscr{V}^\star(D)$ the set of all the $\star$-valuation overrings of $D$.
Then,
\begin{itemize}
\item[(1)] the canonical map $\Zar(\Na(D, \star)) \rightarrow \Zar(D)$, $W \mapsto W\cap K$, is a continuous surjective map having as its image $\mathscr{V}^\star(D)$.
\item[(2)] the canonical continuous surjective map $\Zar(D) \rightarrow \Spec(D)$ restricted to $\mathscr{V}^\star(D)$ is a bijection with $\QSpec^{\stf}(D)$.
\end{itemize}
\end{corollary}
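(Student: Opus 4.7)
The plan is to deduce both parts of the corollary by stringing together bijections provided by Lemma~\ref{Kr} and Lemma~\ref{Na=Kr}, after first using the hypothesis that $D$ is a P$\star$MD to identify $\Na(D,\star)$ with $\Kr(D,\star)$ (Lemma~\ref{Na=Kr}(1), (i)$\Leftrightarrow$(iii)).

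For part (1), once $\Na(D,\star)=\Kr(D,\star)$, I would invoke Lemma~\ref{Kr}(2) directly: the assignment $W \mapsto W\cap K$ already establishes a bijection between $\Zar(\Kr(D,\star))$ and the set of $\star$-valuation overrings of $D$, so the image of the canonical map $\Zar(\Na(D,\star)) \to \Zar(D)$ is exactly $\mathscr{V}^{\star}(D)$. For continuity, I would take a subbasic open set $\Zar(D[x])$ with $x \in K$ and check that its preimage is $\Zar(\Na(D,\star)[x])$, which is a subbasic open in the Zariski topology on $\Zar(\Na(D,\star))$; this is straightforward because $W \cap K \supseteq D[x]$ if and only if $x \in W$, and $x \in K \subset K(X)$.

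For part (2), I would chain three bijections:
\begin{itemize}
\item $V \in \mathscr{V}^{\star}(D) \longleftrightarrow W := V(X) \in \Zar(\Na(D,\star))$, which is the inverse of the bijection from part (1);
\item $W \in \Zar(\Na(D,\star)) \longleftrightarrow Q := M_W \cap \Na(D,\star) \in \Spec(\Na(D,\star))$, valid because $\Na(D,\star)=\Kr(D,\star)$ is B\'ezout (Lemma~\ref{Kr}(3)) and in particular Pr\"ufer, so valuation overrings are localizations at primes;
\item $Q \in \Spec(\Na(D,\star)) \longleftrightarrow Q \cap D \in \QSpec^{\stf}(D)$, by Lemma~\ref{Na=Kr}(2).
\end{itemize}
The nontrivial point is to verify that the composite sends $V \in \mathscr{V}^{\star}(D)$ to the center $M_V \cap D$ of $V$ in $D$, so that the composite really is the restriction of the canonical map $\Zar(D)\to \Spec(D)$. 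Since $W=V(X)=V[X]_{M_V[X]}$, its maximal ideal contracts to $M_V$ on $V$ and hence to $M_V \cap D$ on $D$; consequently $Q \cap D = M_W \cap \Na(D,\star) \cap D = M_W \cap D = M_V \cap D$, as desired.

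The only point requiring any care is the bookkeeping in the last paragraph, confirming that the composition of the three canonical bijections coincides with the center map; everything else is essentially a direct appeal to the earlier lemmas together with the Pr\"ufer/B\'ezout property of $\Na(D,\star)=\Kr(D,\star)$.
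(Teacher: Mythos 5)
Your proof is correct and follows essentially the same route as the paper: part (1) is read off from Lemma~\ref{Kr}(2) after identifying $\Na(D,\star)$ with $\Kr(D,\star)$, and part (2) composes the bijection from (1) with the homeomorphism $\Zar(\Na(D,\star))\to\Spec(\Na(D,\star))$ (valid since $\Na(D,\star)$ is Pr\"ufer) and the contraction bijection of Lemma~\ref{Na=Kr}(2). The only difference is that you spell out the continuity check and the center-map bookkeeping, which the paper leaves implicit.
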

\begin{proof}  (1) is a direct consequence of Lemma \ref{Kr}(2).

(2) Since $D$ is a  P$\star$MD, $\Na(D, \star)=\Kr(D, \star)$ is a Pr\"ufer domain and so the canonical map $\Zar(\Na(D, \star)) \rightarrow \Spec(\Na(D, \star))$ is a homeomorphism. The conclusion follows from (1) and  Lemma \ref{Na=Kr}(2).
\end{proof}


We recall next some results connecting the Pr\"ufer {\sl semistar} multiplication case with the Pr\"ufer {\sl star} multiplication case.

Let $\star$ be a semistar operation on an integral domain $D$ and let $T$ an overring of $D$; we denote by $\star^{|_T}$ the semistar operation on $T$ obtained  by restriction on $\boldsymbol{\overline{F}}(T) \ (\subseteq \boldsymbol{\overline{F}}(D))$ of $\star: \boldsymbol{\overline{F}}(D)\rightarrow \boldsymbol{\overline{F}}(D)$.

Clearly, if $\star$ has finite type, then so does $\star^{|_T}$ (see \cite[Proposition 2.8]{FL1} or \cite[Example 2.1(e)]{FJS-03}).  By \cite[Proposition 3.1 and Corollary 3.2]{FJS-03}, if $D$ is a P$\star$MD, then $T$ is a P${\star^{|_T}}$MD.
 In case of the overring $T$ of $D$ equal to  $R:= D^\star$ , it is easy to see that $\star^{|_R}$ induces a star operation on $R$, when restricted to $\boldsymbol{{F}}(R) $,  and the following holds.

\begin{lemma}\label{lemma1}
Let $D$ be an integral domain and let $\star$ be a semistar operation on $D$. Let  $R:=D^{\tilde{\star}}$ and let $ {\tilde{\star}}^{|_R}:  {\boldsymbol{F}}(R)\rightarrow  {\boldsymbol{F}}(R) $ be the restriction of $\tilde{\star}$ to ${\boldsymbol{F}}(R)$ (so that ${\tilde{\star}}^{|_R}$ is a star operation on $R$).
Then, $D$ is a P$\star$MD if and only if $R$ is a P${\tilde{\star}}^{|_R}$MD.
\end{lemma}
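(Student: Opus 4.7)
The plan is to reduce everything to the Nagata ring criterion from Lemma~\ref{Na=Kr}(1): a domain is a P$\star$MD if and only if its associated Nagata ring is Pr\"ufer. The forward direction is then a short application of cited results, while the reverse direction is handled by proving the identification $\Na(D,\star)=\Na(R, \tilde{\star}^{|_R})$ inside $K(X)$, so that the two Pr\"uferness conditions become literally the same condition.

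For the forward direction, I would argue as follows. By the last assertion of Lemma~\ref{Na=Kr}(1), $D$ is a P$\star$MD if and only if $D$ is a P$\tilde{\star}$MD. By the result \cite[Proposition 3.1 and Corollary 3.2]{FJS-03} recalled in the paragraph preceding the lemma, if $D$ is a P$\tilde{\star}$MD then for every overring $T$ of $D$, $T$ is a P$\tilde{\star}^{|_T}$MD. Applying this to $T:=R=D^{\tilde\star}$ gives the forward implication. (Note that $\tilde\star^{|_R}$ is automatically of finite type on $R$, since $\tilde\star$ is spectral of finite type, and it is a star operation on $R$ because $R^{\tilde\star}=R$.)

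For the reverse direction, I would establish the identity
\[
\Na(D,\star)\;=\;\Na(R,\tilde\star^{|_R}).
\]
Once this holds, Lemma~\ref{Na=Kr}(1)(ii) applied on $R$ gives that $\Na(R,\tilde\star^{|_R})$ is Pr\"ufer, hence $\Na(D,\star)$ is Pr\"ufer, and the same lemma applied on $D$ finishes. To verify the equality, I would use the intersection description from Lemma~\ref{lemma-Na}(3):
\[
\Na(D,\star)=\bigcap\bigl\{D_Q(X)\mid Q\in \QMax^{\stf}(D)\bigr\},
\]
and the analogous formula for $\Na(R,\tilde\star^{|_R})$. The bridge is the assignment $Q\longmapsto Q^{\sharp}:=QD_Q\cap R$ from $\QMax^{\stf}(D)$ to the quasi-$\tilde\star^{|_R}$-maximal ideals of $R$. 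Using the spectral description $\tilde\star=\texttt{s}_{{\QMax}^{\stf}(D)}$ together with $R=\bigcap_{Q\in \QMax^{\stf}(D)} D_Q$, one shows that each $Q^\sharp$ is a quasi-$\tilde\star^{|_R}$-ideal, that this assignment is a bijection onto $\QMax^{\tilde\star^{|_R}}(R)$, and that $R_{Q^\sharp}=D_Q$ (the last equality because $D\setminus Q\subseteq R\setminus Q^\sharp$ and $R\subseteq D_Q$). Passing to $R_{Q^\sharp}(X)=D_Q(X)$ and intersecting yields the claimed identity of Nagata rings.

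The main obstacle is the bijection and the identification $R_{Q^\sharp}=D_Q$, which requires a careful argument using the spectral nature of $\tilde\star$ and the fact that $R$ is by construction the intersection $\bigcap_{Q\in \QMax^{\stf}(D)} D_Q$; the rest (checking that $Q^\sharp$ is a quasi-$\tilde\star^{|_R}$-ideal, surjectivity onto maximal such ideals) is then a routine spectral computation with $\texttt{s}_Y$-type operations.
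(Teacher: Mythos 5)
Your argument is correct, but it is genuinely different from the paper's, which disposes of this lemma with a bare citation to \cite[Proposition 3.3 and Theorem 3.1]{FJS-03}. Your forward direction uses only the overring-stability of the P$\star$MD property (the same fact the paper recalls just before the lemma), while your reverse direction rests on the unconditional identity $\Na(D,\star)=\Na(R,\tilde\star^{|_R})$; note that once you have that identity, Lemma~\ref{Na=Kr}(1)(i)$\Leftrightarrow$(ii) applied on each side gives \emph{both} implications at once, so the separate forward argument is redundant. The verifications you defer do go through: $R_{Q^\sharp}\subseteq D_Q$ because every element of $R\setminus Q^\sharp$ is a unit of $D_Q$, and $D_Q\subseteq R_{Q^\sharp}$ because $D\setminus Q\subseteq R\setminus Q^\sharp$; each $Q^\sharp$ is a quasi-$\tilde\star^{|_R}$-prime since $(Q^\sharp)^{\tilde\star}\cap R\subseteq QD_Q\cap R=Q^\sharp$; and any quasi-$\tilde\star^{|_R}$-maximal $N$ must satisfy $ND_{Q_0}\neq D_{Q_0}$ for some $Q_0$ (otherwise $N^{\tilde\star}\cap R=R$), forcing $N\subseteq Q_0^\sharp$ and hence $N=Q_0^\sharp$, while $Q^\sharp\cap D=Q$ gives injectivity and the incomparability needed to conclude that every $Q^\sharp$ is itself quasi-maximal. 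What your approach buys is a strengthening the paper only obtains later and under stronger hypotheses: your identity of Nagata rings is exactly Corollary~\ref{Na(D*)=Na(R,t)} with $\tilde\star^{|_R}$ in place of $t_R$, valid without assuming $D$ is a P$\star$MD; what it costs is having to redo by hand the spectral bookkeeping that \cite{FJS-03} packages. The only caveat worth recording is the degenerate case $\QMax^{\stf}(D)=\emptyset$ (i.e., $\tilde\star$ trivial), where Lemma~\ref{lemma-Na} does not apply; the paper glosses over this as well.
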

\begin{proof} See \cite[Proposition 3.3 and Theorem 3.1]{FJS-03}.
\end{proof}

\begin{lemma}\label{lemma2}
Let $R$ be an integral domain and let $\ast$ be a star operation on $R$.
Then, $R$ is a P$\ast$MD if and only if $R$ is a P$v$MD and $\ast_{_{\!f}}= t_R$.
\end{lemma}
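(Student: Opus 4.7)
The plan is to treat the two directions separately, using only the basic inequalities $\ast\leq v$ (hence $\ast_f\leq v_f=t$) and $R^\ast=R$ that hold for any star operation on $R$, together with the defining identity $(FF^{-1})^{\ast_f}=R$ for $F\in\f(R)$ in a P$\ast$MD.

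The \emph{if} direction is immediate: if $R$ is a P$v$MD with $\ast_f=t$, then for each $F\in\f(R)$ the $t$-invertibility identity $(FF^{-1})^t=R$ coincides with $(FF^{-1})^{\ast_f}=R$, so $R$ is a P$\ast$MD.

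For the \emph{only if} direction, assume $R$ is a P$\ast$MD. First, the sandwich $R=(FF^{-1})^{\ast_f}\subseteq(FF^{-1})^{t}\subseteq R^t=R$ (using $\ast_f\leq t$) shows that every $F\in\f(R)$ is $t$-invertible, so $R$ is a P$v$MD. It remains to prove $\ast_f=t$; since both operations are of finite type, it suffices to verify $F^\ast=F^v$ for each $F\in\f(R)$, and the inclusion $F^\ast\subseteq F^v$ is automatic from $\ast\leq v$.

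For the reverse inclusion $F^v\subseteq F^\ast$, I would run a short element chase. Exactly the sandwich above, applied to $\ast$ in place of $t$, forces $(FF^{-1})^\ast=R$ (even though $FF^{-1}$ itself need not be finitely generated). Given $y\in F^v=(R:F^{-1})$, we have $yF^{-1}\subseteq R$, so $yFF^{-1}=F(yF^{-1})\subseteq F$, and applying axiom $(\star_1)$ together with monotonicity yields
$$
yR \;=\; y(FF^{-1})^\ast \;=\; (yFF^{-1})^\ast \;\subseteq\; F^\ast,
$$
whence $y\in F^\ast$. The only delicate point is recognizing that the finite-type identity $(FF^{-1})^{\ast_f}=R$ also forces $(FF^{-1})^\ast=R$; once that is in hand, no Nagata, Kronecker, or spectral machinery from Lemma~\ref{Na=Kr} is needed.
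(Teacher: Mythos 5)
Your proof is correct. Note that the paper does not actually prove this lemma: it simply cites \cite[Proposition 3.4]{FJS-03}, so there is no in-paper argument to compare against. Your write-up is a clean, self-contained derivation along the standard lines of that reference. All the key steps check out: the sandwich $R=(FF^{-1})^{\ast_f}\subseteq (FF^{-1})^{t}\subseteq R^{t}=R$ (valid because $\ast\leq v$ gives $\ast_f\leq v_f=t$, and $FF^{-1}\subseteq R$) yields the P$v$MD property; the same sandwich with $\ast_f\leq\ast$ and $(FF^{-1})^{\ast}\subseteq R^{\ast}=R$ gives $(FF^{-1})^{\ast}=R$, which is exactly the ``delicate point'' you flag, and it is handled correctly; and the element chase $yR=(yFF^{-1})^{\ast}\subseteq F^{\ast}$ for $y\in F^{v}=(R:F^{-1})$ is a valid application of $(\star_1)$ and $(\star_2)$ (with the trivial case $y=0$ implicit). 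Since $\ast_f$ and $t$ are both of finite type, agreement on $\f(R)$ does give $\ast_f=t_R$. In effect you have reproved the standard fact that a $\ast_f$-invertible finitely generated ideal satisfies $F^{\ast}=F^{v}$; this is exactly the content needed, and your proof requires none of the Nagata/Kronecker machinery of Lemma~\ref{Na=Kr}, which is a genuine advantage in self-containedness over the paper's bare citation.
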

\begin{proof}
 See \cite[Proposition 3.4]{FJS-03}. \
 \end{proof}

\begin{lemma}\label{lemma3}
Let $D$ be an integral domain, let $\star$ be a semistar operation on $D$, and let $R=D^{\tilde{\star}}$.
Assume that $D$ is a P$\star$MD. Then, $R$ is a P$v_R$MD and:
$$
{\rm QSpec}^{\star_{_{\!f}}}(D)=\{Q\cap D\mid Q\in {\rm Spec}^{t_R}(R)\}\,,
$$
and $R_Q=D_{Q\cap D}$ for each $Q\in {\rm Spec}^{t_R}(R)$.
\end{lemma}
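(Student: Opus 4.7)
The plan is to identify the Nagata ring of $(D,\star)$ with that of $(R,t_R)$, and then transport the quasi-spectral and localization information via the contraction bijections of Lemma~\ref{Na=Kr}(2). For the first assertion, apply Lemma~\ref{lemma1} to conclude that $R$ is a P$\stt^{|_R}$MD; since $\stt$ is of finite type, so is $\stt^{|_R}$, and Lemma~\ref{lemma2} then yields that $R$ is a P$v_R$MD and $\stt^{|_R}=t_R$.

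The crux is to show $\Na(D,\star)=\Na(R,t_R)$. Because $D$ is a P$\star$MD and $R$ is a P$v_R$MD, Lemma~\ref{Na=Kr}(1)(iii) reduces each Nagata ring to the corresponding Kronecker function ring, so by Lemma~\ref{Kr}(3) the task is to prove $\mathscr{V}^\star(D)=\mathscr{V}^{t_R}(R)$. For $V\in\mathscr{V}^\star(D)$, the identity $\stf=\stt$ (Lemma~\ref{Na=Kr}(1)) forces $R=D^{\stt}=D^{\stf}\subseteq V$. Given $G\in\f(R)$, pick generators $r_1,\dots,r_n\in R$ and set $G':=\sum r_iD\in\f(D)$, so $G=G'R$. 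Since $\stt$ is spectral, $G'^{\,\stt}=\bigcap_{M\in\QMax^{\stf}(D)}G'D_M$, which is automatically an $R$-module (as $R\subseteq D_M$ for each such $M$); thus $G\subseteq G'^{\,\stt}$, giving $G^{\stt}=G'^{\,\stt}=G'^\star\subseteq G'V\subseteq GV$. Since $t_R=\stt^{|_R}$, this shows $V\in\mathscr{V}^{t_R}(R)$, and the reverse inclusion is symmetric (for $W\in\mathscr{V}^{t_R}(R)$ and $F\in\f(D)$, one has $F^\star=F^{\stt}=(FR)^{\stt}=(FR)^{t_R}\subseteq(FR)W=FW$).

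With $\Na(D,\star)=\Na(R,t_R)$ in hand, apply Lemma~\ref{Na=Kr}(2) to both $(D,\star)$ and $(R,t_R)$. Each prime $\mathfrak{Q}$ of the common Nagata ring contracts to $P:=\mathfrak{Q}\cap D\in\QSpec^{\stf}(D)$ and to $Q:=\mathfrak{Q}\cap R\in\Spec^{t_R}(R)$ (no ``quasi'' needed because $t_R$ is a star operation on $R$), with $Q\cap D=P$; this is the claimed bijection. For the localization, $D_P$ and $R_Q$ are valuation domains, so $\Na(D,\star)_{\mathfrak{Q}}=D_P(X)$ and $\Na(R,t_R)_{\mathfrak{Q}}=R_Q(X)$; contracting to $K$ yields $D_P=R_Q$. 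I expect the main obstacle to be the Nagata-ring equality, whose technical heart is the chain $G^{\stt}=G'^{\,\stt}=G'^\star$: the first equality exploits the stability of $\stt$ (to see that $G'^{\,\stt}$ is already an $R$-module), and the second uses $\stf=\stt$, which is available precisely because of the P$\star$MD hypothesis.
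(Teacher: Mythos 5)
Your proof is correct, but it reverses the logical order of the paper's development. The paper argues directly: for $P\in\QSpec^{\stf}(D)$ the localization $D_P$ is a valuation domain, one sets $Q:=PD_P\cap R$ and invokes Kang's result \cite[Lemma 3.17]{K-89} to see that $Q$ is a $t_R$-prime with $D_P=R_Q$ and $Q\cap D=P$; the converse is the one-line chain $P\subseteq P^{\stf}\cap D\subseteq Q^{\stf}\cap D=Q^{t_R}\cap D=P$. You instead first prove $\Na(D,\star)=\Na(R,t_R)$ -- which in the paper is Corollary~\ref{Na(D*)=Na(R,t)}, deduced \emph{from} this lemma -- and then transport everything through $\Spec$ of the common Nagata ring via Lemma~\ref{Na=Kr}(2). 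There is no circularity, because you establish the Nagata-ring identity independently, by passing to Kronecker function rings (Lemma~\ref{Na=Kr}(1)(iii) and Lemma~\ref{Kr}(3)) and showing $\mathscr{V}^\star(D)=\mathscr{V}^{t_R}(R)$; the key computation $G^{\stt}=G'^{\,\stt}=G'^{\,\star}$ for $G=G'R$, resting on $\stt$ being spectral and on $\stf=\stt$ in a P$\star$MD, is sound (the only cosmetic slip is ``generators $r_1,\dots,r_n\in R$'' for $G\in\f(R)$, whose generators lie in $K$; one scales by a common denominator, which changes nothing). What your route buys: it proves Corollary~\ref{Na(D*)=Na(R,t)} along the way without relying on the lemma, it upgrades the stated set equality to an explicit bijection $\QSpec^{\stf}(D)\leftrightarrow\Spec^{t_R}(R)$, and it derives $R_Q=D_{Q\cap D}$ uniformly by contracting $\Na(D,\star)_{\mathfrak{Q}}=D_P(X)=R_Q(X)$ to $K$. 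What it costs: it invokes the \texttt{eab}/Kronecker machinery and the full strength of Lemma~\ref{Na=Kr}, where the paper's argument needs only localizations and one citation to Kang.
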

\begin{proof}
As above, let $\boldsymbol{\ast} := {\widetilde{\star}}^{|_R}:  {\boldsymbol{F}}(R)\rightarrow  {\boldsymbol{F}}(R) $   be the restriction of $\tilde{\star}$ to ${\boldsymbol{F}}(R)$.
Clearly,   $\boldsymbol{\ast}$ is a  star operation on $R$ and $R$ is a   P$\boldsymbol{\ast}$MD (Lemma \ref{lemma1}). Therefore, $R$ is a P$v_R$MD and  $\boldsymbol{\ast_{_{\!f}}}=t_R$ (Lemma \ref{lemma2}).
 Also, since $D$ is a P$\star$MD, $\tilde{\star}=\star_f$ (Lemma \ref{Na=Kr}) and its restriction $\boldsymbol{\ast}$ is of finite type. Thus we have $\boldsymbol{\ast}=\boldsymbol{\ast_{_{\!f}}}=t_R$.
Now let $P$ be a quasi-$\star_{_{\!f}}$-prime ideal of $D$. Then $D_P$ is a valuation domain and $D_P=R_Q$, where $Q:=PD_P\cap R$.
Therefore, $Q$ is a $t_R$-prime ideal of $R$  \cite[Lemma 3.17]{K-89} and $P=Q\cap D$.
Conversely, let $Q$ be a $t_R$-prime ideal of $R$ and let $P:=Q\cap D$. Then $P\subseteq P^{\star_{_{\!f}}}\cap D=(Q\cap D)^{\star_{_{\!f}}}\cap D\subseteq Q^{\star_{_{\!f}}}\cap D=
Q^{\boldsymbol{\ast}}\cap D=
Q^{\boldsymbol{\ast}_{_{\!f}}}\cap D=Q^{t_R}\cap D=Q\cap D=P$. Thus $P$ is a quasi-$\star_{_{\!f}}$-prime ideal of $D$.
\end{proof}

\begin{corollary} \label{Na(D*)=Na(R,t)} Let $D$ be an integral domain, let $\star$ be a semistar operation on $D$,  let $R=D^{\tilde{\star}}$, and let $\boldsymbol{\ast}$ the star operation on $R$
defined by restricting $\widetilde{\star}$ to $\boldsymbol{F}(R)$.
Assume that $D$ is a P$\star$MD. Then the Pr\"ufer domain $\Na(D, \star)$ coincides with $\Na(R, \boldsymbol{\ast} )=\Na(R, v_R)$.
\end{corollary}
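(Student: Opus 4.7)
The plan is to express both Nagata rings as intersections of Nagata rings of one-dimensional localizations via Lemma~\ref{lemma-Na}(3), and then match the indexing sets term-by-term using the bijection of Lemma~\ref{lemma3}.

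First I would reduce the equality $\Na(R,\boldsymbol{\ast})=\Na(R,v_R)$. Inside the proof of Lemma~\ref{lemma3} it is shown that $\boldsymbol{\ast}=\boldsymbol{\ast_f}=t_R$. Since the semistar Nagata ring construction depends only on the associated finite-type operation (noted immediately after the definition of $\Na$), we get $\Na(R,\boldsymbol{\ast})=\Na(R,\boldsymbol{\ast_f})=\Na(R,t_R)=\Na(R,(v_R)_f)=\Na(R,v_R)$. So it remains to prove $\Na(D,\star)=\Na(R,t_R)$.

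Next I would apply Lemma~\ref{lemma-Na}(3) on both sides to rewrite
$$\Na(D,\star)=\bigcap\{D_P(X)\mid P\in\QMax^{\stf}(D)\}\quad\text{and}\quad\Na(R,t_R)=\bigcap\{R_Q(X)\mid Q\text{ maximal }t_R\text{-ideal of }R\}.$$
The bijection supplied by Lemma~\ref{lemma3},
$$\text{Spec}^{t_R}(R)\longrightarrow \QSpec^{\stf}(D),\qquad Q\mapsto Q\cap D,$$
satisfies $R_Q=D_{Q\cap D}$, hence $R_Q(X)=D_{Q\cap D}(X)$ term-by-term. It is therefore enough to verify that this bijection carries maximal $t_R$-ideals of $R$ onto quasi-$\stf$-maximal ideals of $D$.

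That matching of maximals is the one step requiring a small argument, and it is the only real obstacle to a routine proof. Suppose $Q$ is a maximal $t_R$-ideal of $R$ and that $Q\cap D\subseteq P$ for some $P\in\QMax^{\stf}(D)$. By Lemma~\ref{lemma3}, $P=Q'\cap D$ for a unique $Q'\in\text{Spec}^{t_R}(R)$, and then $Q\cap D\subseteq Q'\cap D$. Injectivity of the contraction $Q\mapsto Q\cap D$ (which is forced by the equality $R_Q=D_{Q\cap D}$, since $Q=(Q\cap D)R_Q\cap R$) together with the analogous fact for $Q'$ forces $Q\subseteq Q'$, and maximality of $Q$ gives $Q=Q'$, so $Q\cap D=P$. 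The reverse direction is symmetric: a maximal quasi-$\stf$-ideal of $D$ is of the form $Q\cap D$ for some $t_R$-prime $Q$ of $R$, and any $t_R$-prime strictly above $Q$ would contract to a quasi-$\stf$-prime strictly above $Q\cap D$. Combining these identifications with the indexing identity above yields $\Na(D,\star)=\Na(R,t_R)$, completing the proof.
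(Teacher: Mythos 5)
Your proposal is correct and follows the same basic strategy as the paper (rewrite both Nagata rings as intersections of rings $D_P(X)$ and match the indexing sets via Lemma~\ref{lemma3}), but you take a detour the paper avoids. The paper intersects over \emph{all} of $\QSpec^{\stf}(D)$ and all of $\Spec^{t_R}(R)$ --- legitimate because $\Na(D,\star)=\bigcap\{D_M(X)\mid M\in\QMax^{\stf}(D)\}$ is unchanged by throwing in the smaller primes $P$, whose $D_P(X)$ only enlarge each term --- so Lemma~\ref{lemma3} matches the index sets term-by-term with no further work. By insisting on indexing over maximal elements only, you create the one nontrivial step of your argument (that the contraction carries maximal $t_R$-ideals onto quasi-$\stf$-maximal ideals), and your justification there is slightly off as written: injectivity of $Q\mapsto Q\cap D$ alone does not yield $Q\subseteq Q'$ from $Q\cap D\subseteq Q'\cap D$; you need the map to reflect the order. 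That property does follow from the ingredients you cite, since $Q\cap D\subseteq Q'\cap D$ gives $R_{Q'}=D_{Q'\cap D}\subseteq D_{Q\cap D}=R_Q$, and then $Q=QR_Q\cap R=(Q\cap D)D_{Q\cap D}\cap R\subseteq (Q'\cap D)D_{Q'\cap D}\cap R=Q'$. So the argument is repairable and the conclusion stands; the paper's version simply buys you the same result with none of this bookkeeping.
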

\begin{proof}
This is an easy consequence of Lemma \ref{lemma3}, since in the present situation,  $\boldsymbol{\ast}=t_R$ and $\Na(D, \star) =\bigcap\{D_P(X) \mid P \in \QSpec^{\stf}(D)\} =
\bigcap\{R_Q(X) \mid Q \in \Spec^{t_R}(R)\}= \Na(R, t_R)=\Na(R, v_R)$.
\end{proof}


\section{A semistar characterization of sharpness  in Pr\"ufer  domains}\label{prufer}

\smallskip

Given   an integral domain $D$ and a prime ideal  $P \in \Spec(D)$, set
$$
\begin{array}{rl}
\nabla(P) :=& \hskip -8pt  \{ M \in \Max(D) \mid M \nsupseteq P\}\,,\\
\Delta (P) :=& \hskip -7pt \nabla(P)  \cup \{P\}\,,\\
\Theta(P) :=& \hskip -7pt  \bigcap \{ D_M \mid M \in \nabla(P) \}\,.
\end{array}
$$

We use the simpler notation $\nabla$ (respectively, $\Delta$; $\Theta$)  when no possible confusion can arise from the omission of  the prime ideal $P$.
We say that $P$ is {\it sharp}  (or, has the {\it \#-property}) if $\Theta(P) \nsubseteq D_P$  (see \cite[Lemma 1]{G-66} and \cite[Section 1 and Proposition 2.2]{FHL-10}).
\smallskip

 The goal of this section is to provide a characterization  of sharpness using (spectral) semistar operations, at least in some   important classes of integral domains.

Clearly,  for each $P \in \Spec(D)$, we have the following relations among the semistar operations associated to $\nabla$ and to $\Delta$:
$$
{\texttt{s}_{\Delta}}:= {\texttt{s}_{\Delta(P)}} \leq {\texttt{s}_{\nabla(P)}}=: {\texttt{s}_{\nabla}} \,,
$$
$$
 \widetilde{ {\texttt{s}_{\Delta}}}  \lneq
 \widetilde{ {\texttt{s}_{\nabla}}} \; \Rightarrow\; ({\texttt{s}_{\Delta}})_{\!_f}   \lneq    ({\texttt{s}_{\nabla}})_{\!_f} \; \Rightarrow\; {\texttt{s}_{\Delta}}  \lneq     {\texttt{s}_{\nabla}}\,.
 $$


\begin{lemma}
Let $D$ be an integral domain and let $P$ be a nonzero prime ideal of $D$. If $P$ is sharp, then $(s_{\Delta(P)})_{\!_f}\lneq (s_{\nabla(P)})_{\!_f}$. Moreover, assume that $D$ is a finite-conductor domain, i.e., the intersection of any two principal ideals of $D$ is finitely generated. Then $P$ being sharp implies $\widetilde{s_{\Delta(P)}}\lneq \widetilde{s_{\nabla(P)}}$.
\end{lemma}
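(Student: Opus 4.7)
For the first assertion I would evaluate both operations on $E=D$ itself. Since $D=1\cdot D\in\f(D)$, the identity $F^\star=F^\stf$ for finitely generated $F$ gives
$$
D^{(s_{\nabla(P)})_{\!_f}}\;=\;D^{s_{\nabla(P)}}\;=\;\bigcap_{M\in\nabla(P)}D_M\;=\;\Theta(P),
$$
and likewise $D^{(s_{\Delta(P)})_{\!_f}}=D_P\cap\Theta(P)$. Sharpness of $P$ is by definition $\Theta(P)\nsubseteq D_P$, which forces these two submodules of $K$ to differ; combined with the inequality $(s_{\Delta(P)})_{\!_f}\le(s_{\nabla(P)})_{\!_f}$ recorded in the display preceding the lemma, this yields $(s_{\Delta(P)})_{\!_f}\lneq(s_{\nabla(P)})_{\!_f}$.

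For the second assertion my plan is to invoke Lemma~\ref{spectral}(3). Since $\nabla(P)\subseteq\Delta(P)$ already implies $\widetilde{s_{\Delta(P)}}\le\widetilde{s_{\nabla(P)}}$, strictness is equivalent to $\Clinv(\Delta(P))\ne\Clinv(\nabla(P))$. The point $P$ itself is the natural candidate: $P\in\Delta(P)\subseteq\Clinv(\Delta(P))$ for free, so the whole task reduces to showing $P\notin\Clinv(\nabla(P))$. Unwinding the definition $\Clinv(Y)=\bigcap\{D(J)\mid D(J)\supseteq Y,\,J\in\f(D)\}$, this amounts to producing a finitely generated ideal $J\subseteq P$ of $D$ with $J\nsubseteq M$ for every $M\in\nabla(P)$.

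Such a $J$ comes directly from the witness of sharpness. Choose $a,b\in D$ with $x:=a/b\in\Theta(P)\setminus D_P$ and set $I:=(bD:_D a)=\{d\in D\mid da\in bD\}$. A routine denominator manipulation gives $I\nsubseteq M\Leftrightarrow x\in D_M$ and $I\subseteq P\Leftrightarrow x\notin D_P$, so $I$ has the desired avoidance and containment properties. The decisive step is the identification $aI=aD\cap bD$ of ordinary ideals of $D$: the finite-conductor hypothesis forces $aD\cap bD$, and hence (since $D$ is a domain) $I$ itself, to be finitely generated. Taking $J:=I$ completes the argument.

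The main obstacle is exactly this last step: promoting the fractional witness $a/b$ of sharpness to an \emph{ordinary, finitely generated} ideal of $D$ with the required avoidance properties. The finite-conductor hypothesis is tailored precisely for this, via the identification $aI=aD\cap bD$; once that is in hand, everything else is routine bookkeeping with the inverse-topology description of $\Clinv$ and with Lemma~\ref{spectral}(3).
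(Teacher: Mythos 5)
Your proposal is correct and follows essentially the same route as the paper: the first claim by evaluating both operations at $D\in\f(D)$, and the second by reducing, via Lemma~\ref{spectral}(3) and the inverse-closure description, to exhibiting a finitely generated ideal $J\subseteq P$ with $J\nsubseteq M$ for all $M\in\nabla(P)$, taking $J=(D:_D xD)$ for a witness $x\in\Theta(P)\setminus D_P$. Your explicit identification $aI=aD\cap bD$ merely spells out the step the paper leaves implicit when it asserts that the finite-conductor hypothesis makes $(D:_D xD)$ finitely generated.
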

\begin{proof}  The first statement follows from the fact that $D\in \f(D)$ and $\Theta(P)\not\subseteq D_P$ implies that $D^{s_\Delta}\neq D^{s_\nabla}$.

For the second statement, note first that $\Theta(P)\not\subseteq D_P$ if and only if there exists an element $x$ in the quotient field of $D$ such that $(D:_D xD)\subseteq P$ but $(D:_D xD)\not\subseteq M$ for all $M\in \nabla$.
On the other hand, 
 by Lemma \ref{spectral}, we have $\widetilde{s_\Delta}\neq \widetilde{s_\nabla}$ if and only if $\Clinv(\Delta)\neq \Clinv(\nabla)$, i.e., $P\not\in \Clinv(\nabla)$.  Note here that $P\not\in \Clinv(\nabla)$ is equivalent to   the existence of a finitely generated ideal $J$ of $D$ such that $J\subseteq P$ but $J\not\subseteq M$ for all $M\in \nabla$. Therefore, if $D$ is a finite-conductor domain, then $(D:_D xD)$ is a finitely generated ideal and hence the conclusion follows.
\end{proof}

\begin{proposition} \label{sharp-tilde} Let  $P \in \Spec(D)$. Assume that $D$ is a Pr\"ufer domain. Then, $ P$ is  sharp if and only if
$ \ \widetilde{ {\texttt{s}_{\Delta(P)}}}  \lneq
 \widetilde{ {\texttt{s}_{\nabla(P)}}}  \,.$
\end{proposition}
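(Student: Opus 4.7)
The forward implication is essentially already in hand. A Pr\"ufer domain is a finite-conductor domain (indeed every finitely generated fractional ideal is invertible and hence $(D :_D xD)$ is finitely generated for each $x$ in the quotient field), so the previous lemma gives: if $P$ is sharp, then $\widetilde{\texttt{s}_{\Delta(P)}} \lneq \widetilde{\texttt{s}_{\nabla(P)}}$. My plan is to write this one sentence and then concentrate on the converse.

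For the converse, suppose $\widetilde{\texttt{s}_{\Delta(P)}} \lneq \widetilde{\texttt{s}_{\nabla(P)}}$. First I apply Lemma~\ref{spectral}(3) to translate this into $\Clinv(\Delta(P)) \neq \Clinv(\nabla(P))$. Since $\nabla(P) \subseteq \Delta(P)$ and the two sets differ only by $P$, this forces $P \notin \Clinv(\nabla(P))$. By the very definition of the inverse-topology closure ($\Clinv(\nabla(P))$ is the intersection of basic closed sets $D(J)$ with $J \in \f(D)$ containing $\nabla(P)$), the failure of $P$ to lie in this closure delivers a finitely generated ideal $J$ of $D$ with $J \subseteq P$ but $J \not\subseteq M$ for every $M \in \nabla(P)$.

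Now I invoke the Pr\"ufer hypothesis, which makes $J$ invertible. For each $M \in \nabla(P)$, $J \not\subseteq M$ means $J D_M = D_M$, hence $J^{-1} \subseteq D_M$; taking the intersection over $M \in \nabla(P)$ gives $J^{-1} \subseteq \Theta(P)$. It remains to show $J^{-1} \not\subseteq D_P$. Since $D_P$ is a valuation domain and $JD_P$ is invertible, $JD_P = aD_P$ for some $a \in D_P$, and $J \subseteq P$ forces $a \in P D_P$. Then $a^{-1} \in J^{-1} D_P \setminus D_P$, so $J^{-1} \not\subseteq D_P$, and therefore $\Theta(P) \not\subseteq D_P$, i.e., $P$ is sharp.

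The only mildly delicate step is the extraction of the witness $J$ from $P \notin \Clinv(\nabla(P))$; once that is in place, the conclusion is a short calculation exploiting invertibility and the valuation structure of $D_P$ in the Pr\"ufer setting. The forward direction is essentially bookkeeping from the previous lemma, so the whole argument should be quite brief.
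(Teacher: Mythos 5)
Your proof is correct and follows essentially the same route as the paper's: the forward direction via the preceding lemma (observing that a Pr\"ufer domain is a finite-conductor domain), and the converse by translating $\widetilde{\texttt{s}_{\Delta(P)}} \lneq \widetilde{\texttt{s}_{\nabla(P)}}$ through Lemma~\ref{spectral}(3) into the existence of a finitely generated ideal $J \subseteq P$ with $J \nsubseteq M$ for every $M \in \nabla(P)$. The only difference is that where the paper simply cites \cite[Corollary 2]{GH-67} for the equivalence of that finitely-generated-ideal condition with sharpness, you prove the needed direction directly from the invertibility of $J$ and the valuation structure of $D_P$ --- which is precisely the argument the paper runs later, in Proposition~\ref{sharp&fg ideal}, for the general P$\star$MD case.
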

\begin{proof}
 It is known that, in a Pr\"ufer domain $D$,  $\bigcap \{ D_M \mid M \in \nabla  \} \nsubseteq D_P$  if and only if there exists a finitely generated ideal $J$ of $D$ contained in $P$ but not contained in any $M \in \nabla$ \cite[Corollary 2]{GH-67}. Therefore, the conclusion follows from the above lemma and its proof.
\end{proof}


 In general, the condition ${\texttt{s}_{\Delta(P)}} \lneq {\texttt{s}_{\nabla(P)}}$  does not imply  that $P$ is a  sharp prime ideal (even in the Pr\"ufer domain case),
 as the following example shows.

\begin{example} \label{e:almostded}
{\rm Let $D$ be an almost Dedekind domain with a unique maximal ideal $M_0$ non-finitely generated.
 Then, for $P= M_0$,
$$
\begin{array}{lll}
\nabla:=& \hskip -30 pt {\nabla (M_0)}  = \Max(D) \setminus \{M_0\}\,,  \;\;\;\;\; & \hskip -10 pt  \Delta:= {\Delta(M_0)} = \Max(D)\,, \\
\nabla^{\mbox{\tiny \texttt{gen}}}= &  \hskip -25 pt    \Spec(D) \setminus \{M_0\}\,,   \;\;\;\;\; \;\;\;\;\; \;\;\;\;\;\ & \hskip -10 pt \Delta^{\mbox{\tiny \texttt{gen}}}= \Spec(D)\,,\\
\Clinv(\nabla)= &  \hskip -8 pt  \Clinv(\Delta)  = \Spec(D)\,.
\end{array}
$$
Therefore, 
by Lemma \ref{spectral}, ${\texttt{s}_{\Delta}} \lneq {\texttt{s}_{\nabla}}$, but $ \widetilde{ {\texttt{s}_{\Delta}}}  =
 \widetilde{ {\texttt{s}_{\nabla}}}$ and hence $M_0$ is not a sharp ideal.

 Note also that, in the present situation, $ \Max(D)$ is quasi-compact in $\Spec(D)$ (endowed with the Zariski topology), but  $\Max(D) \setminus \{M_0\}$ is not. Therefore,   by Lemma \ref{spectral},
  $ {\texttt{s}_{\Delta}}$ is of finite type but ${\texttt{s}_{\nabla}}$ is not. In fact, $\widetilde{ {\texttt{s}_{\Delta}}} =({\texttt{s}_{\Delta}})_{\!_f}={\texttt{s}_{\Delta}}=d=\widetilde{ {\texttt{s}_{\nabla}}}=({ {\texttt{s}_{\nabla}}})_{\!_f}\lneq {\texttt{s}_{\nabla}}$.
 }
 \end{example}

 \smallskip

 It is natural to ask if  the condition $({\texttt{s}_{\Delta(P)}})_{\!_f}\lneq    ({\texttt{s}_{\nabla(P)}})_{\!_f}$ implies that $P $ is sharp.
 The answer to this question is  affirmative in the case of Pr\"ufer domains:

\begin{corollary} \label{sharp in prufer} Let  $P \in \Spec(D)$. Assume that $D$ is a Pr\"ufer domain. Then,  the following statements are equivalent:
\begin{itemize}
\item[(i)] $ P$ is   a sharp  prime of $D$.
\item[(ii)]  $ \widetilde{ {\texttt{s}_{\Delta(P)}}}  \lneq
 \widetilde{ {\texttt{s}_{\nabla(P)}}}$.

   \item[(iii)]  $ ({\texttt{s}_{\Delta(P)}})_{\!_f}  \lneq
 ({\texttt{s}_{\nabla(P)}}) _{\!_f}$.
     \item[(iv)]  $ ({\texttt{s}_{\Delta(P)}})_{\!_a}   \lneq
 ({\texttt{s}_{\nabla(P)}})_{\!_a} $. \hfill \qed

\end{itemize}
\end{corollary}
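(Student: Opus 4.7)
The plan is to establish (i) $\Leftrightarrow$ (ii) $\Leftrightarrow$ (iii) $\Leftrightarrow$ (iv) by invoking Proposition~\ref{sharp-tilde} for the first equivalence and then reducing the remaining equivalences to a single simple observation: a Pr\"ufer domain is a P$\star$MD for \emph{every} semistar operation $\star$. The implication (i) $\Leftrightarrow$ (ii) is exactly Proposition~\ref{sharp-tilde}, so the real task is to link (ii) with (iii) and with (iv).

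The key observation is that, since $D$ is Pr\"ufer, every $F \in \f(D)$ is invertible, i.e., $FF^{-1} = D$; hence $(FF^{-1})^{\star_{_{\!f}}} = D^{\star_{_{\!f}}} = D^\star$ holds trivially for every semistar operation $\star$ on $D$. Applying this with $\star = \texttt{s}_{\Delta(P)}$ and with $\star = \texttt{s}_{\nabla(P)}$, I conclude that $D$ is simultaneously a P$\texttt{s}_{\Delta(P)}$MD and a P$\texttt{s}_{\nabla(P)}$MD. This one observation unlocks Lemma~\ref{Na=Kr} for both spectral operations.

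For (ii) $\Leftrightarrow$ (iii), the ``in particular'' clause of Lemma~\ref{Na=Kr}(1) provides $\star_{_{\!f}} = \widetilde{\star}$ in every P$\star$MD, giving $({\texttt{s}_{\Delta(P)}})_{\!_f} = \widetilde{{\texttt{s}_{\Delta(P)}}}$ and $({\texttt{s}_{\nabla(P)}})_{\!_f} = \widetilde{{\texttt{s}_{\nabla(P)}}}$, so the equivalence is immediate. For (ii) $\Leftrightarrow$ (iv), Lemma~\ref{Na=Kr}(1)(iv) yields $\widetilde{\star} = \star_{_{\!a}}$ in every P$\star$MD, so $\widetilde{{\texttt{s}_{\Delta(P)}}} = ({\texttt{s}_{\Delta(P)}})_{\!_a}$ and $\widetilde{{\texttt{s}_{\nabla(P)}}} = ({\texttt{s}_{\nabla(P)}})_{\!_a}$, again giving the desired equivalence at once.

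I do not foresee any real obstacle in carrying out this plan: the nontrivial inputs---Proposition~\ref{sharp-tilde} (which encapsulates the finite-conductor lemma together with \cite[Corollary~2]{GH-67}) and Lemma~\ref{Na=Kr}---are already in hand, and the only new idea required is the elementary but essential remark that the Pr\"ufer hypothesis makes every semistar operation on $D$ behave like that on a P$\star$MD, which is precisely the leverage needed to bring Lemma~\ref{Na=Kr} to bear on ${\texttt{s}_{\Delta(P)}}$ and ${\texttt{s}_{\nabla(P)}}$.
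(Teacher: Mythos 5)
Your proposal is correct and follows essentially the same route as the paper: the paper's proof likewise notes that a Pr\"ufer domain is a P$\star$MD for every semistar operation $\star$, invokes Lemma~\ref{Na=Kr}(1) to get $\widetilde{\star}=\star_{_{\!f}}=\star_{_{\!a}}$ for ${\texttt{s}_{\Delta(P)}}$ and ${\texttt{s}_{\nabla(P)}}$, and then concludes via Proposition~\ref{sharp-tilde}. Your explicit justification that every $F\in\f(D)$ is invertible (so $(FF^{-1})^{\star_{_{\!f}}}=D^{\star_{_{\!f}}}$ holds trivially) is a detail the paper leaves implicit, but it is the same argument.
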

\begin{proof} If $D$ is a Pr\"{u}fer domain, then for any semistar operation $\star$ on $D$, $D$ is a P$\star$MD and hence by Lemma \ref{Na=Kr}(1), $\widetilde{\star} =\stf=\star_a$. Therefore, the conclusion follows from Proposition~\ref{sharp-tilde}.
\end{proof}


\smallskip

In Proposition~\ref{sharp-tilde} (and hence in Corollary \ref{sharp in prufer}), the hypothesis that $D$ is a Pr\"ufer domain cannot be omitted. That is, in general, the condition $\widetilde{\texttt{s}_{\Delta(P)}}\lneq \widetilde{\texttt{s}_{\nabla(P)}}$  does not imply  that $P$ is a  sharp prime ideal (even in the finite-conductor domain case):

\begin{example}{\rm
Let $D:=K[X, Y]$, where $K$ is a field, and let $P$ be a maximal ideal of $D$.
Then $\Delta:=\Delta(P)=\mbox{Max}(D)$ 
and hence $\widetilde{\texttt{s}_{\Delta}}= ({\texttt{s}_{\Delta}})_{\!_f} =\texttt{s}_{\Delta}=d$. Since $D$ is a Krull domain, $D=\bigcap \{D_Q \mid Q\in X^{1}(D) \}$, where $X^{1}(D)$ is the set of height one prime ideals of $D$.
Also, since $D$ is a Hilbert ring and every maximal ideal of $D$ has height 2, each $Q\in X^{1}(D)$ is  contained in infinitely many maximal ideals of $D$ by \cite[Theorem 147]{Kap} (or by \cite[Theorem 30]{Kap}). Thus, we have $Q \subset N$ for some $N \in \nabla:=\nabla(P)=\Max(D) \setminus \{P\}$.
Therefore, $\bigcap \{ D_N \mid N\in \nabla\}\subseteq \bigcap \{ D_Q \mid Q\in X^{1}(D)\}=D$, i.e., $\bigcap \{ D_N \mid N\in \nabla\}\subseteq D_P$. Thus, $P$ is not sharp.
Also, since $P$ is finitely generated,
$$
P^{({\texttt{s}_{\nabla}})_{\!_f}}=P^{{\texttt{s}_{\nabla}}}=\bigcap\{PD_N \mid N\in \nabla\} =\bigcap \{D_N \mid N\in \nabla\} =D\,.
$$
This implies that $d\neq (\texttt{s}_{\nabla})_{\!_f}$ and $P\not\in {\rm QSpec}^{(\texttt{s}_{\nabla})_{\!_f}}(D)$. It is easy to check that $\nabla={\rm QMax}^{(\texttt{s}_{\nabla})_{\!_f}}(D)$. Then $\widetilde{\texttt{s}_{\nabla}}=s_{{\rm QMax}^{(\texttt{s}_{\nabla})_{\!_f}}(D)}=s_\nabla$. Thus, we have $\widetilde{\texttt{s}_{\nabla}}=(\texttt{s}_{\nabla})_{\!_f}=s_\nabla$, and hence $\widetilde{\texttt{s}_{\Delta}}\lneq \widetilde{\texttt{s}_{\nabla}}$.
 }
\end{example}


\bigskip
\section{Sharpness  in Pr\"ufer $\star$-multiplication domains}

\smallskip
The goal of this section is to investigate the notion of sharpness in the more general setting of Pr\"ufer $\star$-multiplication  domains. 

Given a semistar operation $\star$ on an integral domain $D$ and a prime ideal  $P \in \QSpec^{\stf}(D)$, set
$$
\begin{array}{rl}
\nabla^{\stf}(P) :=& \{ M \in \QMax^{\stf}(D) \mid M \nsupseteq P\}\,,\\
\Delta^{\stf}(P) :=& \nabla^{\stf}(P)  \cup \{P\}\,,\\
\Theta^{\stf}(P) :=&  \bigcap \{ D_M \mid M \in \nabla^{\stf}(P) \}\,.
\end{array}
$$
As in the previous section, we use the simpler notation $\nabla^{\stf}$ (respectively, $\Delta^{\stf}$; $\Theta^{\stf}$)  when no possible confusion can arise from the omission of the prime ideal $P$.

We say that $P$ is {\it $\stf$-sharp} (or, {\it $\stf$-\#})  if $\Theta^{\stf}(P) \nsubseteq D_P$.
For example, if $\star =d$  is the identity,  the sharp property  coincides -by definition- with the $d$-sharp property (see, also, \cite{G-64}, \cite{G-66}, \cite{GH-67}, \cite[page 62]{FHP97}, \cite{FHL-10}, \cite[Chapter 2, Section 3]{FHL-13}).

Clearly, for each semistar operation $\star$ and for each $P \in  \QSpec^{\stf}(D)$, we have:
$$
{\texttt{s}_{\Delta^{\!\stf}}}:= {\texttt{s}_{\Delta^{\!\stf}(P)}} \leq {\texttt{s}_{\nabla^{\!\stf}(P)}}=: {\texttt{s}_{\nabla^{\!\stf}}}, \, \mbox{and}
$$
$$ P \mbox{ is} \stf\mbox{-sharp} \;\; \Rightarrow   \;\;  ({\texttt{s}_{\Delta^{\!\stf}(P)}})_{\!_f}  \lneq ({\texttt{s}_{\nabla^{\!\stf}(P)}})_{\!_f}  \;\;  \Rightarrow   \;\;
{\texttt{s}_{\Delta^{\!\stf}(P)}} \lneq {\texttt{s}_{\nabla^{\!\stf}(P)}}\,.
$$
 As we have already observed in Example~\ref{e:almostded}, (even) in the Pr\"ufer domain case  (i.e., when $\stf=d$), the condition ${\texttt{s}_{\Delta^{\!\stf}(P)}} \lneq {\texttt{s}_{\nabla^{\!\stf}(P)}}$ 
  does not imply  that $P$ is a $\stf$-sharp prime ideal.

 The  next goal is to show that, if $D$ is a Pr\"ufer $\star$-multiplication domain, then
 the condition  $({\texttt{s}_{\Delta^{\!\stf}(P)}})_{\!_f}  \lneq ({\texttt{s}_{\nabla^{\!\stf}(P)}})_{\!_f}$ coincides with the property that $P \mbox{ is} \stf\mbox{-sharp}$.

\smallskip

We start by recalling that, if $D$ is a Pr\"ufer domain and $P\in  \Spec(D)$, Gilmer and Heinzer  \cite[Corollary 2]{GH-67}  proved that $P$ is sharp if and only if there exists a finitely generated ideal  $I$ of $D$ such that $I \subseteq P$ and $I \not\subseteq M$ for each $M \in \nabla(P)$.  This was generalized to P$v$MDs in \cite[Theorem 1.6]{ghl}.

\begin{proposition}\label{sharp&fg ideal} Let $D$ be a P$\star$MD and  $P \in \QSpec^{\!\stf}(D)$. Then
$P$ is $\stf$-sharp if and only if $P$ contains a finitely generated ideal $I$ of $D$ such that $I \nsubseteq M$ for each $M \in \nabla^{\stf}(P)$.
\end{proposition}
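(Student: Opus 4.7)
The plan is to lift the problem to the Pr\"ufer domain $\Na(D,\star)$ via the bijection $P \leftrightarrow P\Na(D,\star)$ between $\QSpec^{\stf}(D)$ and $\Spec(\Na(D,\star))$ from Lemma \ref{Na=Kr}(2). By Lemma \ref{Na=Kr}(1), $\Na(D,\star)$ is a Pr\"ufer domain; by Lemma \ref{lemma-Na}(4) the bijection restricts to one on maximal ideals; and a direct computation from $\Na(D,\star) = D[X]_{N(\star)}$ shows $\Na(D,\star)_{P\Na(D,\star)} = D_P(X)$, so $\Na(D,\star)_{P\Na(D,\star)} \cap K = D_P$ (and similarly for each $M \in \QMax^{\stf}(D)$). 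Because $P\Na(D,\star) \cap D = P$ and $M\Na(D,\star) \cap D = M$, contraction yields, for any ideal $H$ of $D$: $H \subseteq P$ iff $H\Na(D,\star) \subseteq P\Na(D,\star)$, and likewise for $M$.

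For sufficiency I would work directly in $D$. Given a finitely generated $I \subseteq P$ with $I \nsubseteq M$ for each $M \in \nabla^{\stf}(P)$, the P$\star$MD hypothesis makes $D_M$ a valuation domain, so $I \nsubseteq M$ forces $ID_M = D_M$, which gives $(D:I)D_M \subseteq (D_M:ID_M) = D_M$. Thus $(D:I) \subseteq \bigcap_{M \in \nabla^{\stf}(P)} D_M = \Theta^{\stf}(P)$. Because $I$ is finitely generated, $(D:I)$ commutes with localization at $P$, i.e., $(D:I)D_P = (D_P:ID_P)$; as $ID_P \subseteq PD_P \subsetneq D_P$, the valuation ring $D_P$ gives $(D_P:ID_P) \supsetneq D_P$. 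Writing an element of $(D_P:ID_P) \setminus D_P$ as $c/s$ with $c \in (D:I)$ and $s \in D \setminus P$ forces $c \notin D_P$, so $c \in (D:I) \setminus D_P \subseteq \Theta^{\stf}(P) \setminus D_P$, and hence $P$ is $\stf$-sharp.

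For necessity I pull the problem into $\Na(D,\star)$. Fixing $x \in \Theta^{\stf}(P) \setminus D_P \subseteq K$, we have $x \in D_M \subseteq D_M(X) = \Na(D,\star)_{M\Na(D,\star)}$ for each $M \in \nabla^{\stf}(P)$, while $x \in K \setminus D_P$ forces $x \notin \Na(D,\star)_{P\Na(D,\star)}$. Therefore $P\Na(D,\star)$ is sharp in the Pr\"ufer domain $\Na(D,\star)$. The Gilmer-Heinzer criterion \cite[Corollary 2]{GH-67} produces a finitely generated ideal $J$ of $\Na(D,\star)$ with $J \subseteq P\Na(D,\star)$ and $J \nsubseteq M\Na(D,\star)$ for every $M \in \nabla^{\stf}(P)$. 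Writing $J = (f_1, \ldots, f_n)\Na(D,\star)$ with $f_i \in D[X]$ (after clearing denominators from $N(\star)$), the identity $\boldsymbol{c}(f_i)\Na(D,\star) = f_i\Na(D,\star)$ from the proof of Lemma \ref{Na=Kr} yields $J = I\Na(D,\star)$ for $I := \sum_i \boldsymbol{c}(f_i)$, a finitely generated ideal of $D$. Contracting the inclusions $J \subseteq P\Na(D,\star)$ and $J \nsubseteq M\Na(D,\star)$ back to $D$ then delivers $I \subseteq P$ and $I \nsubseteq M$ for every $M \in \nabla^{\stf}(P)$.

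The main obstacle I expect is the transition in the necessity direction from a finitely generated ideal of $\Na(D,\star)$ to a finitely generated ideal of $D$ with the same extension; this rests on the content-polynomial identity and the ``every ideal is extended'' property in Lemma \ref{Na=Kr}(1)(vi), which are the decisive tools allowing the Pr\"ufer-domain Gilmer-Heinzer theorem to be transported to the P$\star$MD setting. The sufficiency direction, by contrast, is essentially a calculation with $(D:I)$ localized at valuation-overring primes and presents no real difficulty once the P$\star$MD structure is in hand.
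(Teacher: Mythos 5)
Your proof is correct, but the ``only if'' half takes a genuinely different route from the paper's. For sufficiency the two arguments are essentially the same computation with $I^{-1}=(D:I)$: you show $(D:I)\subseteq\Theta^{\stf}(P)$ and $(D:I)\nsubseteq D_P$, obtaining the latter from the valuation property of $D_P$, whereas the paper gets $I^{-1}\nsubseteq D_P$ from the $\stf$-invertibility of $I$ (if $I^{-1}\subseteq D_P$ then $II^{-1}\subseteq P$, so $D=(II^{-1})^{\stf}\cap D\subseteq P$, a contradiction). For necessity, however, the paper stays entirely inside $D$: from $v\in\Theta^{\stf}(P)\setminus D_P$ it forms the $\stf$-invertible ideal $(1,v)$, notes $(1,v)^{-1}=(D:_D v)\subseteq P$, and invokes the $\stf$-finiteness of inverses of $\stf$-invertible ideals \cite[Proposition 2.6]{fo-pi} to extract a finitely generated $J\subseteq(1,v)^{-1}$ with $J^{\star}=((1,v)^{-1})^{\stf}$; containment of $J$ in some $M\in\nabla^{\stf}(P)$ would then force $(D:_D v)\subseteq M$, contradicting $v\in D_M$. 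You instead transport the problem to $\Na(D,\star)$, verify directly that $P\Na(D,\star)$ is sharp there, apply Gilmer--Heinzer \cite[Corollary 2]{GH-67}, and descend via the content identity $\boldsymbol{c}(f)\Na(D,\star)=f\Na(D,\star)$. Both arguments are sound, and yours is not circular: it relies only on Lemmas \ref{lemma-Na} and \ref{Na=Kr} and the classical Pr\"ufer result, not on the later Proposition \ref{sharp}. The paper's version is shorter and isolates exactly the semistar-invertibility input needed; yours makes the reduction to the classical Pr\"ufer case explicit and in effect establishes one implication of Proposition \ref{sharp}(1) along the way.
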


\begin{proof}
 Suppose that $P$ contains a finitely generated ideal $I$ such that $I \nsubseteq M$ for $M \in \nabla^{\stf}(P)$.
 If $I^{-1}\subseteq D_P$, then $II^{-1}\subseteq ID_P\cap D\subseteq PD_P\cap D=P$, and hence $D=(II^{-1})^{\stf}\cap D\subseteq P^{\stf}\cap D=P$, a contradiction. 
  Therefore,
 $I^{-1} \nsubseteq D_P$.
 Choose $u \in I^{-1} \setminus D_P$, and let $M \in \nabla^{\stf}(P)$.  Then, since $I \subseteq (D:_D u)$, we must have $(D:_D u) \nsubseteq M$, that is, $u \in D_M$.  Hence, $\bigcap \{ D_M \mid M \in \nabla^{\stf}(P)\} = \Theta^{\stf}(P)\not\subseteq D_P$, i.e.,  $P$ is $\stf$-sharp.

 For the converse, suppose that there is an element $v \in \Theta^{\stf}(P) \setminus D_P$, so that $(1,v)^{-1}=(D:_D v) \subseteq P$. 
 Since $(1, v)$ is $\stf$-invertible, { $(1, v)^{-1}$ is $\stf$-finite, i.e.,  there is a finitely generated ideal $J \subseteq (1,v)^{-1}$ for which $J^{\star}=((1,v)^{-1})^{\stf}$ \cite[Proposition 2.6]{fo-pi}.}
 We have $J \subseteq P$.  If $J \subseteq M$ for some $M \in \nabla^{\stf}(P)$, then $(1,v)^{-1} \subseteq J^{\star} \cap D \subseteq M^{\stf} \cap D=M$; however, this contradicts the fact that $v \in D_M$.
\end{proof}

\begin{corollary}
Let $D$ be a P$\star$MD and  $P \in \QSpec^{\!\stf}(D)$, then  the following statements are equivalent.
\begin{itemize}
\item [(i)] $P$ is   $\stf$-sharp.
\item [(ii)]  $\widetilde{ \texttt{s}_{\Delta^{\!\stf}\!(P)}} \lneq \widetilde{ \texttt{s}_{\nabla^{\!\stf}\!(P)}}$.
\item[(iii)] $(\texttt{s}_{\Delta^{\!\stf}\!(P)})_{{\!}_f } \lneq (\texttt{s}_{\nabla^{\!\stf}\!(P)})_{{\!}_f }$.
\item[(iv)] $(\texttt{s}_{\Delta^{\!\stf}\!(P)})_{{\!}_a } \lneq (\texttt{s}_{\nabla^{\!\stf}\!(P)})_{{\!}_a }$.
\end{itemize}
\end{corollary}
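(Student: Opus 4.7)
My strategy is first to collapse (ii), (iii), and (iv) into a single condition by invoking Lemma \ref{Na=Kr}(1), and then to establish the equivalence with (i) via Proposition \ref{sharp&fg ideal} together with the $\stf$-invertibility of finitely generated ideals in a P$\star$MD. For the first reduction, I would observe that both spectral operations $\texttt{s}_{\Delta^{\stf}(P)}$ and $\texttt{s}_{\nabla^{\stf}(P)}$ dominate $\stt$. Indeed, $\nabla^{\stf}(P)\subseteq \QMax^{\stf}(D)$ yields $\stt = \texttt{s}_{\QMax^{\stf}(D)} \leq \texttt{s}_{\nabla^{\stf}(P)}$ by monotonicity, and for $\Delta^{\stf}(P)$ the inclusion $(\Delta^{\stf}(P))^{\mbox{\tiny\texttt{gen}}}\subseteq \QSpec^{\stf}(D) = (\QMax^{\stf}(D))^{\mbox{\tiny\texttt{gen}}}$ (which holds because $\QSpec^{\stf}(D)$ is closed under generalization) together with Lemma \ref{spectral}(2) gives $\stt \leq \texttt{s}_{\Delta^{\stf}(P)}$. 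Since $D$ is a P$\star$MD, hence a P$\stt$MD, it is a P-multiplication domain with respect to every semistar operation dominating $\stt$, so in particular with respect to both $\texttt{s}_{\Delta^{\stf}(P)}$ and $\texttt{s}_{\nabla^{\stf}(P)}$. Lemma \ref{Na=Kr}(1) then yields $\widetilde{\texttt{s}_Y} = (\texttt{s}_Y)_{\!_f} = (\texttt{s}_Y)_{\!_a}$ for $Y \in \{\Delta^{\stf}(P), \nabla^{\stf}(P)\}$, so that (ii), (iii), and (iv) become manifestly equivalent.

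For (i) $\Rightarrow$ (iii), Proposition \ref{sharp&fg ideal} supplies a finitely generated ideal $I\subseteq P$ with $I\nsubseteq M$ for every $M \in \nabla^{\stf}(P)$, so that $I^{\texttt{s}_{\nabla^{\stf}(P)}} = \bigcap_{M\in\nabla^{\stf}(P)} ID_M = \Theta^{\stf}(P)$ while $I^{\texttt{s}_{\Delta^{\stf}(P)}} \subseteq ID_P \subseteq D_P$; the $\stf$-sharpness of $P$ then forces strict inequality at $I$. For the converse I would argue by contrapositive: assuming $\Theta^{\stf}(P) \subseteq D_P$, the goal is to show $F^{\texttt{s}_{\nabla^{\stf}(P)}}\subseteq FD_P$ for every $F \in \f(D)$, since this forces $F^{\texttt{s}_{\Delta^{\stf}(P)}} = F^{\texttt{s}_{\nabla^{\stf}(P)}}$ and hence the negation of (iii). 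Now every such $F$ is $\stf$-invertible (as $D$ is a P$\star$MD), and because $\tilde\star=\stf$ is spectral with respect to $\QMax^{\stf}(D)$, a direct computation yields $FF^{-1}D_Q = D_Q$ for each $Q \in \QSpec^{\stf}(D)$, whence the useful equivalence $v \in FD_Q \Leftrightarrow vF^{-1}\subseteq D_Q$. Applying this at each $M \in \nabla^{\stf}(P)$ and at $P$, any $v \in F^{\texttt{s}_{\nabla^{\stf}(P)}}$ satisfies $vF^{-1}\subseteq \bigcap_{M\in\nabla^{\stf}(P)} D_M = \Theta^{\stf}(P)\subseteq D_P$, whence $v \in FD_P$, as required.

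The main obstacle is this last reduction: one must leverage the $\stf$-invertibility of $F$ together with the spectrality of $\tilde\star$ to reduce membership in $FD_Q$ to the single condition $vF^{-1}\subseteq D_Q$. Once that bridge is in place, the hypothesis $\Theta^{\stf}(P)\subseteq D_P$ propagates instantly to the desired equality of finite-type operations, and everything else reduces to bookkeeping with the closure under generalization and with Lemma \ref{Na=Kr}(1).
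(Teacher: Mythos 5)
Your proposal is correct. The first half — showing $\stt\leq\texttt{s}_{\Delta^{\stf}(P)}\leq\texttt{s}_{\nabla^{\stf}(P)}$, deducing that $D$ is a P$\texttt{s}_{Y}$MD for $Y\in\{\Delta^{\stf}(P),\nabla^{\stf}(P)\}$, and collapsing (ii), (iii), (iv) via Lemma~\ref{Na=Kr}(1) — is exactly the paper's argument. Where you diverge is in linking these to (i). The paper closes the loop through the inverse topology: by Lemma~\ref{spectral}(3), $\widetilde{\texttt{s}_{\Delta^{\stf}}}\neq\widetilde{\texttt{s}_{\nabla^{\stf}}}$ iff $P\notin\Clinvrm(\nabla^{\stf})$, iff there is a finitely generated $J\subseteq P$ with $J\nsubseteq M$ for all $M\in\nabla^{\stf}(P)$, and then Proposition~\ref{sharp&fg ideal} converts that witness into sharpness (and back). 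You instead prove (i)$\Rightarrow$(iii) by evaluating both spectral operations at the witness ideal $I$ supplied by Proposition~\ref{sharp&fg ideal} (slightly heavier than needed — testing at $E=D$ already separates them, since $D^{\texttt{s}_{\nabla^{\stf}}}=\Theta^{\stf}(P)$ and $D^{\texttt{s}_{\Delta^{\stf}}}=\Theta^{\stf}(P)\cap D_P$), and you prove (iii)$\Rightarrow$(i) by a direct contrapositive computation: $\stf$-invertibility of $F\in\f(D)$ gives $FF^{-1}D_Q=D_Q$ for $Q\in\QSpec^{\stf}(D)$, hence $v\in FD_Q\Leftrightarrow vF^{-1}\subseteq D_Q$, and $\Theta^{\stf}(P)\subseteq D_P$ then forces $F^{\texttt{s}_{\nabla^{\stf}}}\subseteq FD_P$ and so $(\texttt{s}_{\Delta^{\stf}})_{\!_f}=(\texttt{s}_{\nabla^{\stf}})_{\!_f}$. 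This is a genuinely different route for the converse: it bypasses $\Clinvrm$ and Lemma~\ref{spectral}(3) entirely, at the cost of essentially re-deriving inline the invertibility mechanism that the paper packages once inside Proposition~\ref{sharp&fg ideal}. Both are sound; the paper's version is shorter given the topological machinery already set up, while yours is more self-contained at the ideal-theoretic level. (One cosmetic point: to get $\stt\leq\texttt{s}_{\Delta^{\stf}(P)}$ you do not need closure of $\QSpec^{\stf}(D)$ under generalization — the inclusion $\Delta^{\stf}(P)\subseteq\QSpec^{\stf}(D)$ together with $\texttt{s}_{\QSpec^{\stf}(D)}=\texttt{s}_{\QMax^{\stf}(D)}$ already suffices, which is how the paper phrases it.)
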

\begin{proof} Recall that $\widetilde{\star}=\texttt{s}_{{\tiny\QMax}^{\stf}(D)}=\texttt{s}_{{\tiny \QSpec}^{\stf}(D)}$.
Since $\nabla^{\stf}\subseteq \Delta^{\stf}\subseteq {\QSpec}^{\stf}(D)$, $\texttt{s}_{{\tiny\QSpec}^{\stf}(D)}\leq \texttt{s}_{\Delta^{\stf}}\leq \texttt{s}_{\nabla^{\stf}}$.
Since $D$ is a P$\star$MD, i.e., a P$\widetilde{\star}$MD, $D$ is also a P$\texttt{s}_{\Delta^{\stf}}$MD and a P$\texttt{s}_{\nabla^{\stf}}$MD.
Therefore, by Lemma \ref{Na=Kr}(1), we have that $\widetilde{\texttt{s}_{\Delta^{\stf}}}=(\texttt{s}_{\Delta^{\stf}})_{{\!}_f}=(\texttt{s}_{\Delta^{\stf}})_{{\!}_a}$ and that
$\widetilde{\texttt{s}_{\nabla^{\stf}}}=(\texttt{s}_{\nabla^{\stf}})_{{\!}_f }=(\texttt{s}_{\nabla^{\stf}})_{{\!}_a}$.
Thus, we have the implications   (i)$\Rightarrow$(ii)$\Leftrightarrow$(iii)$\Leftrightarrow$(iv).

For the implication   (ii)$\Rightarrow$(i), recall the fact that $\widetilde{\ \texttt{s}_{\Delta^{\!\stf}}} \neq \widetilde{\ \texttt{s}_{\nabla^{\!\stf}}}$ if and only if $\Clinv(\Delta^{\!\stf})\neq \Clinv(\nabla^{\!\stf})$, i.e., $P\not\in \Clinv(\nabla^{\!\stf})$, which is equivalent to   the existence of a finitely generated ideal $J$ of $D$ such that $J\subseteq P$ but $J\not\subseteq M$ for all $M\in \nabla^{\!\stf}$. Then the conclusion follows   from Proposition \ref{sharp&fg ideal}.
\end{proof}


\smallskip

 An integral domain $D$ is called a {\it $\stf$-sharp domain} (or, {\it $\stf$-\#-domain}) if
each $M \in \QMax^{\stf}(D)$ is  $\stf$-sharp, and it is called a {\it $\stf$-doublesharp domain} (or {\it $\stf$-\#\#-domain}) if each $P \in \QSpec^{\stf}(D)$  is $\stf$-sharp.
A $d$-(double)sharp domain is simply called a {\it (double)sharp domain}. 

We connect the notion of  semistar sharpness (and semistar doublesharpness) to that of star sharpness (and star doublesharpness) of ``special'' overrings.

\begin{proposition}\label{thm}
Let $D$ be an integral domain, let $\star$ be a semistar operation on $D$, and let $R:=D^{\widetilde{\star}}$.
Assume that $D$ is a P$\star$MD. Then,
\begin{enumerate}
\item $D$ is a $\star_{\!_f}$-sharp domain if and only if $R$ is a $t_R$-sharp domain.
\item $D$ is a $\star_{\!_f}$-doublesharp domain if and only if $R$ is a $t_R$-doublesharp domain.
\end{enumerate}
\end{proposition}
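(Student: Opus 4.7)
The plan is to reduce both equivalences to a single prime-by-prime statement: for $P \in \QSpec^{\stf}(D)$ corresponding under Lemma~\ref{lemma3} to $Q \in \Spec^{t_R}(R)$ (with $P = Q\cap D$), $P$ is $\stf$-sharp if and only if $Q$ is $t_R$-sharp. Once this is established, parts (1) and (2) follow immediately by quantifying over maximal quasi-$\stf$-ideals (respectively, all quasi-$\stf$-prime ideals) of $D$ and over the corresponding maximal $t_R$-ideals (respectively, all $t_R$-prime ideals) of $R$.

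First I would invoke Lemma~\ref{lemma3}: it gives a bijection $\varphi \colon \Spec^{t_R}(R) \to \QSpec^{\stf}(D)$, $Q \mapsto Q\cap D$, and asserts $R_Q = D_{\varphi(Q)}$. The next step is to check that $\varphi$ preserves inclusion in both directions. One direction is automatic from contraction; for the converse, if $P_1 \subseteq P_2$ with $P_i = \varphi(Q_i)$, then $R_{Q_1} = D_{P_1} \supseteq D_{P_2} = R_{Q_2}$, and since the correspondence between primes of $R$ and valuation overrings of the form $R_N$ reverses inclusion, this forces $Q_1 \subseteq Q_2$. In particular, $\varphi$ restricts to a bijection $\Max^{t_R}(R) \to \QMax^{\stf}(D)$ (also a consequence of Lemma~\ref{Na=Kr}(2) plus Corollary~\ref{Na(D*)=Na(R,t)}), and $N \nsupseteq Q$ in $R$ if and only if $\varphi(N) \nsupseteq \varphi(Q)$ in $D$.

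Using these facts, I would then rewrite the intersections. For $P = \varphi(Q)$, the equality $R_N = D_{\varphi(N)}$ for $N \in \Max^{t_R}(R)$ gives
\[
\Theta^{\stf}(P) \;=\; \bigcap \{D_M \mid M \in \QMax^{\stf}(D),\, M \nsupseteq P\} \;=\; \bigcap\{R_N \mid N \in \Max^{t_R}(R),\, N \nsupseteq Q\} \;=\; \Theta^{t_R}(Q),
\]
the intersections taking place in the common quotient field $K$ of $D$ and $R$. Combined with $D_P = R_Q$, this yields $\Theta^{\stf}(P) \nsubseteq D_P \iff \Theta^{t_R}(Q) \nsubseteq R_Q$, which is exactly the equivalence of $\stf$-sharpness of $P$ with $t_R$-sharpness of $Q$.

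Finally, I would assemble (1) and (2). For (1), $D$ is $\stf$-sharp means every $M \in \QMax^{\stf}(D)$ is $\stf$-sharp; by the bijection on maximal ideals and the equivalence above, this is equivalent to every $N \in \Max^{t_R}(R)$ being $t_R$-sharp, i.e., $R$ is $t_R$-sharp. Part (2) is identical, but run over all quasi-$\stf$-primes of $D$ and all $t_R$-primes of $R$. I do not anticipate a real obstacle: the entire argument is a translation via Lemma~\ref{lemma3}, and the only point requiring care is verifying that $\varphi$ reflects inclusion (so that ``$\nsupseteq P$'' translates correctly); everything else is bookkeeping.
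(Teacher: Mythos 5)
Your argument is correct and is exactly the route the paper intends: the paper's own proof consists of the single line ``This is a straightforward consequence of Lemma~\ref{lemma3},'' and what you have written is precisely the bookkeeping that line suppresses (the bijection $Q\mapsto Q\cap D$ preserves and reflects inclusion because the localizations $R_Q=D_{Q\cap D}$ are valuation domains, hence it matches up the sets $\nabla$ and identifies $\Theta^{\stf}(P)$ with $\Theta^{t_R}(Q)$ inside $K$). No gaps.
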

\begin{proof} This is a straightforward consequence of Lemma~\ref{lemma3}.
\end{proof}

\medskip

 In 1967 Gilmer and Heinzer proved that a Pr\"ufer domain $D$ is a doublesharp domain if and only if every overring of $D$ is a sharp domain \cite[Theorem 3]{GH-67}.
In order to extend this to the P$\star$MD setting, we need
 to recall a general version of the notion of  linked overring.

Let $D$ be an integral domain, $T$ an overring of $D$, and let $\star, \star'$ be semistar operations on $D, T$, respectively. Following \cite[Section 3]{eBF}, we say that $T$ is {\it  $(\star, \star')$-linked to} $D$ if $F^\star=D^\star$ implies $(FT)^{\star'}=T^{\star'}$ for each nonzero finitely generated integral ideal $F$ of $D$. Clearly, $T$ is   $(\star, \star')$-linked to $D$ if and only if $T$ is   $(\stf, {\star'}{_{\!_f}})$-linked to $D$.
In particular, when $\star = t_D$ and $\star' = t_T$, we say that $T$ is a {\it $t$-linked overring of $D$}.
 Note that $D^{\star}$ is automatically $(\star, \star^{|_{D^\star}})$-linked to $D$, where $\star^{|_{D^\star}}$ is the  (semi)star operation on $D^{\star}$ formed by restricting $\star$ to   ${\boldsymbol{\overline F}}(D^{\star})$.

\smallskip

There is a bijection between the $t$-linked overrings of a Pr\"ufer $v$-multiplication domain and the overrings of its  $t$-Nagata ring. We can generalize this to the case of Pr\"ufer $\star$-multiplication domains.


\begin{theorem} \label{linked&Nagata2}
 Let $D$ be an integral domain with quotient field $K$ and let $\star$ be a semistar operation on $D$. If $D$ is a P$\star$MD, then
the map $T \mapsto \Na(T,t_T)$ from the set of $(\star,t_T)$-linked overrings $T$ of $D$ to the set of overrings of $\Na(D,\star)$ is a bijection with inverse map $\mathcal T \mapsto \mathcal T \cap K$.  In particular, if $R$ is a P$v$MD  with quotient field $K$,  then the map $T \mapsto \Na(T,t_T)$ from the set of $t$-linked overrings of $R$ to the set of overrings of $\Na(R,t_R)$ is a bijection with inverse map $\mathcal T \mapsto \mathcal T \cap K$. \end{theorem}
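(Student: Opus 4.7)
The plan is to show that $\Phi: T \mapsto \Na(T, t_T)$ and $\Psi: \mathcal T \mapsto \mathcal T \cap K$ are well-defined and mutually inverse. The ``In particular'' P$v$MD assertion is the special case $\star = t_R$ of the main statement, equivalently obtainable via the reduction $R := D^{\widetilde{\star}}$ of Corollary~\ref{Na(D*)=Na(R,t)}, noting that the $(\star, t_T)$-linked overrings of $D$ are precisely the $t$-linked overrings of $R$ containing $R$ (clearing denominators translates each $G \in \f(R)$ to a suitable $F \in \f(D)$). I argue the general P$\star$MD case directly.

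For well-definedness of $\Phi$: the linkage applied to $\boldsymbol c(g)$ for any $f/g \in \Na(D, \star)$ yields $g \in N_T(t_T)$, so $\Na(D, \star) \subseteq \Na(T, t_T)$; moreover a denominator-clearing argument together with the $\stf$-invertibility of finitely generated ideals of $D$ shows that $T$ is a P$v_T$MD. For well-definedness of $\Psi$: by Pr\"uferness of $\Na(D, \star)$ and Lemma~\ref{Na=Kr}(2), write $\mathcal T = \bigcap_{P \in \mathcal P} D_P(X)$ for some $\mathcal P \subseteq \QSpec^{\stf}(D)$, whence $T := \mathcal T \cap K = \bigcap_{P \in \mathcal P} D_P$ is an overring of $D$; and for $F \in \f(D)$ with $F^\star = D^\star$, $F$ avoids every quasi-$\stf$-maximal, so $FD_P = D_P$ for each $P \in \mathcal P$, giving $FT = T$ and $(FT)^{t_T} = T$. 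The identity $\Psi\Phi(T) = T$ then follows from Lemma~\ref{lemma-Na}(5) on $T$ with $t_T$: $T^{\widetilde{t_T}} = \Na(T, t_T) \cap K$, while $\widetilde{t_T} = t_T$ by Lemma~\ref{Na=Kr}(1) (since $T$ is P$v_T$MD) and $T^{t_T} = T$.

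The main step is $\Phi\Psi(\mathcal T) = \mathcal T$, proved by double inclusion. It is convenient to replace $\mathcal P$ by its saturation $\widehat{\mathcal P} := \{P \in \QSpec^{\stf}(D) : T \subseteq D_P\}$; one checks $\mathcal T = \bigcap_{P \in \widehat{\mathcal P}} D_P(X)$ as well, since any $P \in \widehat{\mathcal P} \setminus \mathcal P$ satisfies $P \subseteq P'$ for some $P' \in \mathcal P$ and so contributes a redundant $D_P(X) \supseteq D_{P'}(X)$. For each $P \in \widehat{\mathcal P}$ set $N_P := PD_P \cap T$; writing every $a/b \in D_P$ as $a/b$ with $a,b \in D \subseteq T$ and $b \notin N_P$ yields $T_{N_P} = D_P$, and the standard P$v_T$MD fact that a prime of a P$v$MD with valuation localization is a quasi-$t$-prime, with $t_T$ commuting with localization there, gives $F^{t_T}D_P = FD_P$ for every $F \in \f(T)$: so each $D_P$ is a $t_T$-valuation overring of $T$. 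Therefore
\[
\Na(T, t_T) = \Kr(T, t_T) = \bigcap_{V \in \mathscr{V}^{t_T}(T)} V(X) \subseteq \bigcap_{P \in \widehat{\mathcal P}} D_P(X) = \mathcal T.
\]
For the reverse inclusion, $T[X] \subseteq \mathcal T$ (since $T \subseteq \mathcal T$ and $X \in \Na(D, \star) \subseteq \mathcal T$), and flatness of $\mathcal T$ over the Pr\"ufer $\Na(D, \star)$ gives $\mathcal T = T[X]_{N_\mathcal T}$, where $N_\mathcal T$ is the multiplicative set of elements of $T[X]$ that become units in $\mathcal T$. For $g \in N_\mathcal T$, $1/g \in D_P(X)$ for every $P \in \widehat{\mathcal P}$ forces $\boldsymbol c(g)D_P = D_P$, equivalently $\boldsymbol c(g) \not\subseteq N_P$. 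Now for any $M \in \QMax^{t_T}(T)$, the valuation $T_M$ is in particular a valuation overring of the P$v$MD $R = D^{\widetilde{\star}}$ and hence equals $R_{Q'} = D_{Q' \cap D}$ for some $Q' \in \QSpec^{t_R}(R)$ (Lemma~\ref{lemma3}); since $T \subseteq T_M = D_{Q' \cap D}$ we have $Q' \cap D \in \widehat{\mathcal P}$, and as $T_M$ and $T_{N_{Q' \cap D}} = D_{Q' \cap D}$ are two localizations of $T$ at primes that coincide as rings, the primes themselves coincide: $M = N_{Q' \cap D}$. Consequently $\boldsymbol c(g)^{t_T} = T$, so $g \in N_T(t_T)$ and $\mathcal T \subseteq \Na(T, t_T)$.

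The main obstacle is this last identification $\QMax^{t_T}(T) \subseteq \{N_P : P \in \widehat{\mathcal P}\}$: it rests on the classification of valuation overrings of a P$v$MD as $R_{Q'}$ at quasi-$t_R$-primes (pinning down each $T_M$ through Lemma~\ref{lemma3}) and on the saturation trick that enlarges $\mathcal P$ enough to accommodate every $t_T$-maximal of $T$; once in place, the rest of the argument is a routine content-ideal computation.
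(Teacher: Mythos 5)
Your architecture is viable and genuinely different from the paper's: for the hard inclusion $\mathcal T \subseteq \Na(T,t_T)$ the paper takes an arbitrary $\psi=g/f\in\mathcal T$ and uses Kang's content formulas ($f\Na(T,t_T)=\co(f)\Na(T,t_T)$ and $I^{-1}\Na(T,t_T)=(\Na(T,t_T):I\Na(T,t_T))$) to conclude $\co(g)\co(f)^{-1}\subseteq \mathcal T\cap K=T$ and hence $\psi\in\Na(T,t_T)$, whereas you try to realize $\mathcal T$ as a quotient ring of $T[X]$ and to identify $\QMax^{t_T}(T)$ with the contractions $N_P$. The trouble is that three of the justifications you give for this route are not valid as stated. (i) The ``saturation trick'': it is \emph{false} that every $P\in\widehat{\mathcal P}\setminus\mathcal P$ lies under some $P'\in\mathcal P$ --- indeed this fails exactly at non-sharp primes, the central objects of the paper: if $P_0$ is not $\stf$-sharp and $\mathcal P=\nabla^{\stf}(P_0)$, then $T=\Theta^{\stf}(P_0)\subseteq D_{P_0}$, so $P_0\in\widehat{\mathcal P}$, yet $P_0\nsubseteq M$ for every $M\in\mathcal P$. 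The identity $\mathcal T=\bigcap_{P\in\widehat{\mathcal P}}D_P(X)$ is still true, but it needs a real argument (e.g.\ via $\stf$-invertibility: if $s=f/g\in\Na(D,\star)$ is a unit of $\mathcal T$ then $\co(f)^{-1}\subseteq\bigcap_{P'\in\mathcal P}D_{P'}=T\subseteq D_P$, and $\co(f)\subseteq P$ would force $D=(\co(f)\co(f)^{-1})^{\stf}\cap D\subseteq P$). (ii) ``Flatness gives $\mathcal T=T[X]_{N_{\mathcal T}}$'' is not a correct inference: flat overrings need not be quotient rings (overrings of Dedekind domains with non-torsion class group already fail this). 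What rescues the step is that $\Na(D,\star)=\Kr(D,\star)$ is B\'ezout (Lemma~\ref{Kr}(3) with Lemma~\ref{Na=Kr}(1)), so each of its overrings is a genuine localization; that is the fact to invoke.

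(iii) Most seriously, at the step you yourself flag as the crux: ``$T_M$ is a valuation overring of the P$v$MD $R$ and hence equals $R_{Q'}$ for some $Q'\in\QSpec^{t_R}(R)$'' is a false implication, and Lemma~\ref{lemma3} asserts no such classification. A P$v$MD such as $k[x,y]$ has many valuation overrings that are not localizations at quasi-$t$-primes. What you need, and what is true, is that for $M\in\QMax^{t_T}(T)$ the ring $T_M$ is a \emph{$t_R$-valuation} overring of $R$ (because $T$ is $t$-linked to $R$, so $(M\cap R)^{t_R}\neq R$ and $R_N\subseteq T_M$ for some $t_R$-maximal $N$); only then does Corollary~\ref{val-star-prime}(2) pin $T_M$ down as $R_{Q'}=D_{Q'\cap D}$. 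Without this the identification $\QMax^{t_T}(T)\subseteq\{N_P\mid P\in\widehat{\mathcal P}\}$, on which your reverse inclusion rests, is unsupported. A minor further slip: in checking that $T=\mathcal T\cap K$ is $(\star,t_T)$-linked, $FD_P=D_P$ for all $P$ yields only $(T:FT)=T$ and hence $(FT)^{t_T}=T$, not $FT=T$; the weaker statement is what you need. All of these gaps are repairable, but as written the proof is incomplete, and the paper's content-formula computation reaches the same conclusion with considerably less machinery.
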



\begin{proof} We begin by proving the ``in particular'' statement.  Let $T$ be a $t$-linked overring of $R$. Then by \cite[Theorem 3.8 and Corollary 3.9]{K-89}, $T$ is a P$v_T$MD, and it is clear that $\Na(T,t_T)$ is an overring of $\Na(R,t_R)$.  Moreover, $\Na(T,t_T) \cap K = T$ by Lemma~\ref{lemma-Na}(5).  Now let $\mathcal T$ be an overring of $\Na(R,t_R)$.  Then $\mathcal T$ is a Pr\"ufer domain, and hence there is a subset $\Lambda$ of $\Spec^{t_R}(R)$ for which $\mathcal T =\bigcap_{P \in \Lambda} R_P(X)$ \cite[Theorem 26.1]{G}.  It follows that $T:=\mathcal T \cap K = \bigcap_{P \in \Lambda} R_P$.  In particular, $T$ is a P$v_T$MD \cite[Corollary 3.9]{K-89} and is $t$-linked over $R$ \cite[Theorem 3.8]{K-89}.  Moreover, since $\Na(T,t_T) \subseteq R_P(X)$ for $P \in \Lambda$, we have $\Na(T,t_T) \subseteq \mathcal T$.  It remains to show that this inclusion is an equality.  To this end, let $\psi \in \mathcal T$, and write $\psi=g/f$ with $g,f \in T[X]$.  We have $g\Na(T,t_T)=\co(g)\Na(T,t_T)$ and $f\Na(T,t_T)=\co(f)\Na(T,t_T)$ \cite[Lemma 2.11]{K-89}.  Moreover, for any nonzero finitely generated ideal $I$ of $T$, we have $I^{-1}\Na(T,t_T)=(\Na(T,t_T):I\Na(T,t_T))$ \cite[Proposition 2.2]{K-89}.  Since $\co(f)\co(f)^{-1} \Na(T,t_T)=\Na(T,t_T)$, we also have $f^{-1}\Na(T,t_T)=\co(f)^{-1}\Na(T,t_T)$.
Therefore,
$$
\co(g)\co(f)^{-1} \subseteq gf^{-1}\Na(T,t_T) \subseteq gf^{-1}\mathcal T \subseteq \mathcal T,
$$
whence $\co(g)\co(f)^{-1} \subseteq \mathcal T \cap K=T$.  It follows that $\psi=g/f \in \Na(T,t_T)$, as desired.

 For the general statement, let $\star$ be a semistar operation on $D$, and assume that $D$ is a P$\star$MD.  Let $T$ be $(\star,t_T)$-linked to $D$.  Then $\Na(T,t_T) \supseteq \Na(D,\star)$ by \cite[Theorem 3.8]{eBF}. Now let $\mathcal T$ be an overring of $\Na(D,\star)=\Na(R,t_R)$, where $R:=D^{\star}$.  By what has already been proved, $T:=\mathcal T \cap K$ is $t$-linked to $R$, and since $R$ is $(\star,t_R)$-linked to $D$, $T$ is $(\star,t_T)$-linked to $D$ by transitivity \cite[Lemma 3.1(b)]{eBF}.  We also have $\Na(T,t_T)=\mathcal T$.  This completes the proof.
\end{proof}

\smallskip

 We  can now characterize $\stf$-sharpness (or $\stf$-doublesharp\-ness) on $D$ with sharpness  (or doublesharpness) on $\Na(D,\star)$, in the case of Pr\"ufer $\star$-multiplication domains.   Statement (2) generalizes \cite[Theorem 3.6]{ghl}.

\begin{proposition} \label{sharp}
 Let $D$ be a P$\star$MD.
\begin{itemize}
\item[(1)] Let $P \in \QSpec^{\!\stf}(D)$. Then, $P$ is $\stf$-sharp in $D$ if and only if $P\Na(D,\star)$ is sharp in the Pr\"ufer domain $\Na(D,\star)$.
\item[(2)]  $D$ is $\stf$-sharp if and only if $\Na(D,\star)$ is sharp.
\item[(3)]  $D$ is $\stf$-doublesharp if and only if $\Na(D,\star)$ is doublesharp.
\end{itemize}
\end{proposition}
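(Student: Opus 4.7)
The plan is to prove (1) directly and then obtain (2) and (3) at once by letting $P$ range over $\QMax^{\stf}(D)$ and $\QSpec^{\stf}(D)$ respectively. The reason (2) and (3) reduce so cleanly to (1) is that, by Lemma~\ref{Na=Kr}(2), the map $P\mapsto P\Na(D,\star)$ is a bijection $\QSpec^{\stf}(D)\to\Spec(\Na(D,\star))$ with inverse $Q\mapsto Q\cap D$, and by Lemma~\ref{lemma-Na}(4) it restricts to a bijection $\QMax^{\stf}(D)\to\Max(\Na(D,\star))$. So once (1) is known, ``every maximal (respectively, prime) $\stf$-ideal of $D$ is $\stf$-sharp'' translates element-by-element into ``every maximal (respectively, prime) ideal of $\Na(D,\star)$ is sharp''.

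To prove (1), I would characterize each sharpness condition by the existence of a suitable finitely generated witness ideal. On the $D$-side, Proposition~\ref{sharp&fg ideal} gives that $P$ is $\stf$-sharp if and only if $P$ contains a finitely generated ideal $I$ of $D$ with $I\nsubseteq M$ for every $M\in\nabla^{\stf}(P)$. On the $\Na(D,\star)$-side, the ring is a Pr\"ufer domain (Lemma~\ref{Na=Kr}(1)), so the classical Gilmer--Heinzer criterion \cite[Corollary 2]{GH-67} applies: $P\Na(D,\star)$ is sharp if and only if it contains a finitely generated ideal $\mathcal{A}$ of $\Na(D,\star)$ not contained in any $M\Na(D,\star)$ with $M\in\nabla^{\stf}(P)$. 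The bridge between the two conditions is Lemma~\ref{Na=Kr}(1)(vi) together with the explicit construction in its proof: every nonzero finitely generated ideal of $\Na(D,\star)$ has the form $I\Na(D,\star)$ with $I=\co(f)$ a finitely generated ideal of $D$, and conversely any finitely generated $I\subseteq D$ extends to a finitely generated $I\Na(D,\star)$.

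The key verification I would then carry out is that containment of finitely generated ideals in primes transfers correctly across extension/contraction. Using $Q\Na(D,\star)\cap D=Q$ for $Q\in\QSpec^{\stf}(D)$ (established in the proof of Lemma~\ref{Na=Kr}(2)), the equivalence $I\subseteq Q \Leftrightarrow I\Na(D,\star)\subseteq Q\Na(D,\star)$ holds for every finitely generated $I\subseteq D$, applied with $Q=P$ and with each $Q=M\in\nabla^{\stf}(P)$. Combining this with the two finitely-generated-ideal characterizations above yields (1); then (2) and (3) follow immediately from the spectral bijections recorded in the first paragraph. The main obstacle I expect is just the careful bookkeeping needed to combine several Section~1 results at once (the prime bijection, the fact that every ideal of $\Na(D,\star)$ is extended, and the explicit form of finitely generated ideals); no genuinely new argument beyond these is required.
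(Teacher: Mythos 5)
Your proposal is correct, but for part (1) it follows a genuinely different route from the paper. The paper's proof is structural: it sets $T=\Theta^{\stf}(P)$, observes that both $T$ and $D_P$ are $(\star,t)$-linked overrings of $D$, and invokes the bijection of Theorem~\ref{linked&Nagata2} (that $\mathcal T\mapsto\mathcal T\cap K$ inverts $T\mapsto\Na(T,t_T)$) to conclude that $T\nsubseteq D_P$ iff $\Na(T,t_T)=\bigcap\{D_M(X)\mid M\in\nabla^{\stf}(P)\}\nsubseteq D_P(X)=\Na(D_P,t_{D_P})$, the right-hand side being exactly sharpness of $P\Na(D,\star)$; this requires re-running part of the argument of Theorem~\ref{linked&Nagata2} to identify $\Na(T,t_T)$ with the displayed intersection. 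You instead match the two finitely-generated-witness criteria --- Proposition~\ref{sharp&fg ideal} on the $D$-side and the Gilmer--Heinzer criterion \cite[Corollary 2]{GH-67} on the Pr\"ufer ring $\Na(D,\star)$ --- via the facts that every finitely generated ideal of $\Na(D,\star)$ is extended from a finitely generated ideal of $D$ (Lemma~\ref{Na=Kr}(1)(vi)) and that $Q\Na(D,\star)\cap D=Q$ for $Q\in\QSpec^{\stf}(D)$. This is a legitimate and arguably more elementary argument: it bypasses Theorem~\ref{linked&Nagata2} entirely for part (1), at the cost of leaning on Proposition~\ref{sharp&fg ideal} (which the paper proves independently and earlier, so there is no circularity). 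The one point you leave implicit and should record is that $\nabla(P\Na(D,\star))=\{M\Na(D,\star)\mid M\in\nabla^{\stf}(P)\}$, i.e., that the extension/contraction bijection of Lemma~\ref{Na=Kr}(2) and Lemma~\ref{lemma-Na}(2) preserves and reflects the containment $P\subseteq M$; this follows at once from $Q\Na(D,\star)\cap D=Q$. Your handling of (2) and (3) coincides with the paper's.
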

\begin{proof}
(1) Set $T=\Theta^{\stf}(P)=\bigcap \{D_M \mid M\in \nabla^{\stf}(P)\}$.
Then $T$ is a $(\star, t_T)$-linked overring of $D$ and $D_P$ is a $(\star, t_{D_P})$-linked overring of $D$ \cite[Lemma 3.1]{eBF}.
By Theorem \ref{linked&Nagata2}, $T\not\subseteq D_P$ if and only if $\Na(T, t_T)\not\subseteq \Na(D_P, t_{D_P})$.
Note that $\Na(D_P, t_{D_P})=D_P(X)$. We can also show that $\Na(T, t_T)=\bigcap \{D_M(X) \mid M\in \nabla^{\stf}(P) \}$
by the same argument as in the proof of Theorem \ref{linked&Nagata2}.
Thus, we have the equivalence that $\bigcap \{D_M \medskip \mid M\in \nabla^{\stf}(P)\} \not\subseteq D_P$ if and only if $\bigcap \{ D_M(X) \mid M\in \nabla^{\star_f}(P)\} \not\subseteq D_P(X)$, that is, $P$ is $\stf$-sharp in $D$ if and only if $P\Na(D, \star)$ is sharp in $\Na(D, \star)$.

(2) and (3) are direct consequences of (1) and Lemma \ref{Na=Kr}(2).
\end{proof}

One might hope for a correspondence similar to that in Theorem~\ref{linked&Nagata2} for overrings $T$ of a P$\star$MD $D$ that are $(\star,\star')$-linked to $D$ (for some semistar operation $\star'$ on $T$).  However, there are too many such $T$, as the following result shows.
\begin{proposition} \label{p:linked} Let $D$ be a P$\star$MD, where $\star$ is a semistar operation on $D$, let $S$ be a $t$-linked overring of $D^{\star}$, and let $T$ be a ring with $D \subseteq T \subseteq S$. Then, there is a semistar operation $\star'$ on $T$ such that $T$ is $(\star,\star')$-linked to $D$.
\end{proposition}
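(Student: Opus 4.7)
The plan is to exhibit an explicit candidate for $\star'$ by pulling back the $t$-operation of $S$. Define $\star' : \boldsymbol{\overline{F}}(T) \to \boldsymbol{\overline{F}}(T)$ by $E^{\star'} := (ES)^{t_S}$. The three semistar axioms follow immediately from the corresponding properties of $t_S$ together with $T \subseteq S$; in particular, each $(ES)^{t_S}$ is already an $S$-module, so idempotence is automatic, and $T^{\star'} = (TS)^{t_S} = S^{t_S} = S$. Consequently, what must be verified is that for every nonzero finitely generated ideal $F$ of $D$ with $F^\star = D^\star$ one has $(FT)^{\star'} = T^{\star'}$; since $TS = S$ this reduces to the single assertion $(FS)^{t_S} = S$.

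To prove this identity, I would pass through the intermediate domain $R := D^\star$. Because $D \in \boldsymbol{f}(D)$ one has $D^\star = D^{\stf}$, and because $D$ is a P$\star$MD, Lemma~\ref{Na=Kr}(1) gives $\widetilde{\star} = \stf$, so $R = D^{\widetilde{\star}}$ and Lemma~\ref{lemma3} applies: $R$ is a P$v_R$MD, the assignment $N \mapsto N \cap D$ is a bijection between $\Max^{t_R}(R)$ and $\QMax^{\stf}(D)$, and $R_N = D_{N \cap D}$. Using the spectral description $\widetilde{\star} = \texttt{s}_{\QMax^{\stf}(D)}$, the hypothesis $F^\star = D^\star$ becomes $\bigcap \{FD_M \mid M \in \QMax^{\stf}(D)\} = R$; since $1 \in R$, this forces $FD_M = D_M$, and hence $F \nsubseteq M$, for every $M \in \QMax^{\stf}(D)$. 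Under the bijection above, this translates to $FR \nsubseteq N$ for every $N \in \Max^{t_R}(R)$, and so the finitely generated integral $R$-ideal $FR$ is not contained in any $t_R$-maximal ideal of $R$; by the standard finite-type argument, this forces $(FR)^{t_R} = R$.

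Finally, the hypothesis that $S$ is $t$-linked to $R$, applied to the $R$-ideal $FR$ with $(FR)^{t_R} = R$, yields $((FR)S)^{t_S} = S$, i.e.\ $(FS)^{t_S} = S$, which completes the verification that $(FT)^{\star'} = T^{\star'}$. I expect the main subtlety to be the transfer step $F^\star = D^\star \Rightarrow (FR)^{t_R} = R$: its correctness rests essentially on the P$\star$MD hypothesis, which forces $\stf = \widetilde{\star}$ and makes the spectral identification of Lemma~\ref{lemma3} available, and on the elementary but important observation that a finitely generated integral ideal is $t$-equal to the whole ring exactly when it avoids every $t$-maximal ideal.
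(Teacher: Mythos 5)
Your proposal is correct and follows essentially the same route as the paper: both define $\star'$ by $E^{\star'}:=(ES)^{t_S}$ and reduce the linkedness condition to the single identity $(FS)^{t_S}=S$. The only difference is that the paper dispatches that identity by invoking transitivity of linkedness ($S$ is $t$-linked to $D^{\star}$, which is linked to $D$), whereas you verify the intermediate step $F^{\star}=D^{\star}\Rightarrow (FR)^{t_R}=R$ by hand via the spectral description of $\widetilde{\star}$ and Lemma~\ref{lemma3}; note, incidentally, that this step does not really rest on the P$\star$MD hypothesis, since for any star operation $\ast$ on $R=D^{\star}$ one has $\ast_{_{\!f}}\leq t_R$, so $(FR)^{\ast}=R$ with $FR$ finitely generated already forces $(FR)^{t_R}=R$.
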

\begin{proof} Define $\star'$ on $T$ by $E^{\star'}:=(ES)^{t_S}$, for each $E \in \Fb(T)$ \cite[Proposition 2.9]{FL1}.
Let $I$ be a finitely generated ideal of $D$ with $I^{\star}=D^{\star}$.  Then, since $S$ is ($t$-linked over $D^{\star}$ and therefore) $(\star,t_S)$-linked to $D$, $(IT)^{\star'}=(IS)^{t_S}=S=T^{\star'}$; that is, $T$ is $(\star,\star')$-linked to $D$.
\end{proof}

Finally, Gimer-Heinzer's characterization  of doublesharp Pr\"ufer domains b(and its generalization to P$v$MDs \cite[Proposition 2.8]{ghl}) can be extended to the case of
P$\star$MDs as follows.

\begin{corollary} \label{c:##} Let $D$ be a P$\star$MD, where $\star$ is a semistar operation on $D$.  The following are equivalent.
 \begin{enumerate}
\item[(i)] $D$ is a $\stf$-doublesharp domain.
\item[(ii)]  If $T$ is an overring of $D$ and $\star'$ is a semistar operation on $T$ such that $T$ is $(\star,\star')$-linked to $D$, then $T$ is a $\star'{\!{_{\!{_f}}}}$-sharp domain.
\item[(iii)]  If $T$ is a $(\star, t_T)$-linked overring of $D$, then $T$ is a $t_T$-sharp domain.
\end{enumerate}
\end{corollary}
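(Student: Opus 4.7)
The plan is to follow the same pattern as the classical Gilmer--Heinzer result \cite[Theorem 3]{GH-67}, but to transport everything to the Pr\"ufer domain $\Na(D,\star)$ and its overrings, and then translate back. The two pivots are Proposition~\ref{sharp}(2)--(3), which identifies $\stf$-(double)sharpness of the P$\star$MD $D$ with (double)sharpness of the Pr\"ufer domain $\Na(D,\star)$, and Theorem~\ref{linked&Nagata2}, which identifies $(\star,t_T)$-linked overrings of $D$ bijectively with overrings of $\Na(D,\star)$ via $T\mapsto \Na(T,t_T)$. I will prove (i)$\Rightarrow$(ii)$\Rightarrow$(iii)$\Rightarrow$(i).

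The implication (ii)$\Rightarrow$(iii) is immediate on specializing $\star'=t_T$, since $t_T$ is its own finite-type associate. For (iii)$\Rightarrow$(i), I first use Proposition~\ref{sharp}(3) to rewrite (i) as: $\Na(D,\star)$ is a doublesharp Pr\"ufer domain. By \cite[Theorem 3]{GH-67}, this is equivalent to every overring of $\Na(D,\star)$ being sharp. By Theorem~\ref{linked&Nagata2}, every such overring has the form $\Na(T,t_T)$, where $T$ is $(\star,t_T)$-linked to $D$ and is automatically a P$v_T$MD. Applying Proposition~\ref{sharp}(2) to $T$ with star operation $t_T$ shows that sharpness of $\Na(T,t_T)$ is equivalent to $t_T$-sharpness of $T$. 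Thus (iii) delivers exactly the sharpness of every overring of $\Na(D,\star)$, which is the doublesharpness required for (i).

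For (i)$\Rightarrow$(ii), let $T$ be an overring of $D$ and $\star'$ a semistar operation on $T$ such that $T$ is $(\star,\star')$-linked to $D$. By \cite[Theorem 3.8]{eBF} (already invoked in the proof of Proposition~\ref{sharp}(1)), linkage gives the inclusion $\Na(T,\star')\supseteq \Na(D,\star)$. Since $D$ is a P$\star$MD, $\Na(D,\star)$ is Pr\"ufer (Lemma~\ref{Na=Kr}(1)), so its overring $\Na(T,\star')$ is Pr\"ufer as well; applying the reverse direction of Lemma~\ref{Na=Kr}(1) to $T$ forces $T$ to be a P$\star'$MD. By (i) and Proposition~\ref{sharp}(3), $\Na(D,\star)$ is doublesharp, so by \cite[Theorem 3]{GH-67} every one of its overrings is sharp; in particular $\Na(T,\star')$ is sharp. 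Proposition~\ref{sharp}(2) applied to the P$\star'$MD $T$ then yields that $T$ is $\star'_{\!{_{\!{_f}}}}$-sharp, which is (ii).

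The only slightly subtle point is in (i)$\Rightarrow$(ii), where $\star'$ on $T$ is \emph{arbitrary} subject only to the $(\star,\star')$-linkage hypothesis; a priori one might need to relate quasi-$\star'_{\!{_{\!{_f}}}}$-maximal ideals of $T$ to quasi-$\stf$-primes of $D$ and transfer sharpness at the ideal-theoretic level via Proposition~\ref{sharp&fg ideal}. What avoids that route is the observation that $(\star,\star')$-linkage together with the Pr\"uferness of $\Na(D,\star)$ forces $T$ to be a P$\star'$MD for free, after which the Nagata-ring dictionary of Proposition~\ref{sharp} applies uniformly. Apart from checking this P$\star'$MD upgrade, every other step reduces to invoking results already established in Sections~1 and~3.
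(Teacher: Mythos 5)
Your proof is correct and follows essentially the same route as the paper: the cycle (i)$\Rightarrow$(ii)$\Rightarrow$(iii)$\Rightarrow$(i), with Proposition~\ref{sharp} translating $\stf$-(double)sharpness to the Nagata ring, Theorem~\ref{linked&Nagata2} parametrizing its overrings, \cite[Theorem 3.8]{eBF} for the inclusion $\Na(T,\star')\supseteq\Na(D,\star)$, and \cite[Theorem 3]{GH-67} on the Pr\"ufer side. The only (harmless) divergence is that you deduce that $T$ is a P$\star'$MD from the Pr\"uferness of $\Na(T,\star')$ via Lemma~\ref{Na=Kr}(1), where the paper simply cites \cite[Corollary 5.4]{eBF}.
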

\begin{proof}  Assume that $D$ is a $\stf$-doublesharp  domain.  Then $\Na(D,\star)$ is a doublesharp Pr\"ufer domain by Proposition~\ref{sharp}.  Let $T$ be a $(\star,\star')$-linked overring of $D$.  By \cite[Theorem 3.8]{eBF}, $\Na(T,\star') \supseteq \Na(D,\star)$, whence $\Na(T,\star')$ is a sharp domain.  Since $T$ is a P$\star'$MD \cite[Corollary 5.4]{eBF}, $T$ is a $\star'_{{\!}_f }$-sharp domain by Proposition~\ref{sharp}.  Thus (i) $\Rightarrow$ (ii).

The implication (ii) $\Rightarrow$ (iii) is trivial.

Assume (iii), and let $\mathcal T$ be an overring of the Pr\"ufer domain $\Na(D,\star)$.  By Theorem~\ref{linked&Nagata2}, $T:=\mathcal T \cap K$ is $(\star,t_T)$-linked to $D$ and hence $t_T$-sharp by assumption.  Then, since $\mathcal T =\Na(T, t_T)$ by Theorem~\ref{linked&Nagata2}, $\mathcal T$ is sharp by Proposition~\ref{sharp}.  It follows that $\Na(D,\star)$ is doublesharp, and hence, by Proposition ~\ref{sharp}, that $D$ is $\stf$-doublesharp.  Therefore, (iii) $\Rightarrow$ (i).
\end{proof}


 \end{document}